\numberwithin{equation}{section}
\theoremstyle{plain}
\newtheorem{theorem}{Theorem}[section]
\newtheorem{prop}[theorem]{Proposition}
\newtheorem{lemma}[theorem]{Lemma}
\newtheorem{corollary}[theorem]{Corollary}
\theoremstyle{definition}
\newtheorem{remark}[theorem]{Remark}
\newcommand{\Rmnum}[1]{\expandafter\@slowromancap\romannumeral #1@}
\newcommand{\mr}{\mathbb{R}}
\newcommand{\ud}{\mathrm{d}}
\newcommand{\ms}{\mathbb{S}}
\keywords{Q-curvature,  complete metric, volume entropy}
\subjclass{Primary: 53C18,   Secondary: 58J90.}
\address{Mingxiang Li, Department  of Mathematics \& Institue of Mathematical Sciences, Chinese University of Hong Kong}
\email{mingxiangli@cuhk.edu.hk}
\begin{document}
	\title{The total Q-curvature, volume entropy and polynomial growth polyharmonic functions (II)}
	\author{Mingxiang Li}
\date{}
\maketitle
	\begin{abstract}
		This is a continuation of our previous work (Advances in Mathematics 450 (2024), Paper No. 109768). In this paper, we  characterize complete metrics with finite total Q-curvature as normal metrics for all dimensional cases. Secondly, we introduce another volume entropy to provide geometric information regarding complete non-normal metrics with finite total Q-curvature. In particular, we show that if the scalar curvature is bounded from below, the volume growth of such complete metrics is controlled.
	\end{abstract}

	\section{Introduction}
	
Given a  smooth and complete  conformal metric $g=e^{2u}|dx|^2$ on $\mr^n$ where the integer $n\geq 2$, the Q-curvature  satisfies 
 the conformally invariant equation:
	\begin{equation}\label{equ:conformal eqution}
		(-\Delta)^{\frac{n}{2}}u(x)=Q(x)e^{nu(x)}, \quad x\in\mr^n.
	\end{equation}
When $n$ is odd, the operator $(-\Delta)^{\frac{n}{2}}$ is  non-local, and its precise definition  will be discussed in Section \ref{sec:normal metrics}.
We say that  the metric $g=e^{2u}|dx|^2$ on $\mr^n$ is equipped with   finite total Q-curvature if  the Q-curvature is well-defined and  $Qe^{nu}\in L^1(\mr^n).$
For more information about Q-curvature, we recommend interested readers to refer to the introduction of \cite{Li 23 Q-curvature}. More properties about conformally invariant  equation \eqref{equ:conformal eqution}  with constant $Q$ can be found in \cite{JMMX}, \cite{Lin}, \cite{Mar MZ}, \cite{WX},  and the references therein.

The famous Cohn-Vossen inequality (See \cite{Cohn}, \cite{Hu}) shows that  for $n=2$ in  the equation \eqref{equ:conformal eqution}, if the metric $g=e^{2u}|dx|^2$ is complete with finite total Q-curvature, then the following  holds
\begin{equation}\label{CV ineq}
	\int_{\mr^2}Qe^{2u}\ud x\leq 2\pi.
\end{equation}
For $n=2$, the Q-curvature precisely coincides with the Gaussian curvature of the conformal metric. In Huber's work \cite{Hu}, a broader class of complete non-compact surfaces is examined.
During  dealing with  such problems, an important concept called the normal metric was introduced by Finn \cite{Finn} for $n=2$. The analogous definition for higher dimensional cases  is considered by Chang, Qing, and Yang \cite{CQY}.  Precisely, the conformal metric $g=e^{2u}|dx|^2$  with finite total Q-curvature is said to be  normal if  the solution $u(x)$ to  the equation \eqref{equ:conformal eqution} satisfies the integral equation
\begin{equation}\label{integral equation}
	u(x)=\frac{2}{(n-1)!|\mathbb{S}^n|}\int_{\mr^n}\log\frac{|y|}{|x-y|}Q(y)e^{nu(y)}\ud y+C
\end{equation}
where $|\mathbb{S}^n|$ denotes the volume of standard sphere $\mathbb{S}^n$ and $C$ is a constant which may be different from line to line throughout this paper.
Briefly, normal solutions to \eqref{equ:conformal eqution} can be represented by using the Green's function of the operator $(-\Delta)^{\frac{n}{2}}$ via a  potential. Since the kernel of the operator $(-\Delta)^{\frac{n}{2}}$ on $\mr^n$ includes  more than constants, not all solutions to \eqref{equ:conformal eqution} are normal. Hence, it is necessary to add some conditions to ensure  the solutions to be normal. To serve this aim, Chang, Qing, and Yang \cite{CQY} established a sufficient condition for normal metrics by requiring that the scalar curvature  $R_g$ is non-negative near infinity. This is a very comprehensive condition since it is geometric and reveals that the scalar curvature plays an important role in studying higher-dimensional problems related to Q-curvature. Recently, the author \cite{Li 24 revisited} followed their steps to further investigate the relationship between Q-curvature and scalar curvature. Moreover,  in \cite{CQY}, they generalized the Cohn-Vossen inequality \eqref{CV ineq} to the higher dimensional case and showed that  if the complete metric $g=e^{2u}|dx|^2$ on $\mr^4$ with finite total Q-curvature is normal, the integral of Q-curvature is bounded from above.  Subsequently, a lot of works are established in this direction including \cite{BN1}, \cite{BN2}, \cite{CQY2}, \cite{Fa}, \cite{Li 24 revisited}, \cite{LuWang}, \cite{NX}, \cite{Wang} and the references therein.

Is there another geometric condition to ensure  normal metrics?  Furthermore, does  a necessary and sufficient condition for normal metrics exist?  In \cite{Li 23 Q-curvature}, the author  introduced  a volume entropy to achieve this goal.
The volume entropy is defined as follows 
$$\tau(g):=\lim_{R\to\infty}\sup\frac{\log V_g(B_R(0))}{\log|B_R(0)|}$$
where $B_R(0)$ denotes the Euclidean ball with radius $R$ centered at the origin and   $V_g(B_R(0))$, $|B_R(0)|$ represent the volume of $B_R(0)$ related to the metric $g$ and Euclidean metric respectively.   Theorem 1.1 in \cite{Li 23 Q-curvature}  showed that the complete metric $g$ is normal if and only if $\tau(g)$ is finite for even integer $n$. Firstly, we aim to generalize such result to all dimensions.
\begin{theorem}\label{thm: tau(g)}
	Consider a  smooth and complete   metric $g=e^{2u}|dx|^2$ on $\mr^n$  with finite total Q-curvature where  the integer $n\geq 2$. Then the  metric $g$ is normal if and only if $\tau(g)$ is finite.  Moreover, if the volume entropy $\tau(g)$ is finite, one has
	$$\tau(g)=1-\frac{2}{(n-1)!|\mathbb{S}^n|}\int_{\mr^n}Qe^{nu}\ud x.$$
\end{theorem}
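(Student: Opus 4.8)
The plan is to control the growth of $u$ at infinity through the splitting $u=v+h$, where
\[
v(x):=\frac{2}{(n-1)!|\mathbb{S}^n|}\int_{\mr^n}\log\frac{|y|}{|x-y|}\,Q(y)e^{nu(y)}\,\ud y
\]
is the logarithmic potential generated by the curvature measure and $h:=u-v$; by construction $g$ is normal exactly when $h$ is constant. Writing $\Lambda:=\frac{2}{(n-1)!|\mathbb{S}^n|}\int_{\mr^n}Qe^{nu}\,\ud y$ for the normalized total Q-curvature, the first step is the logarithmic asymptotics
\[
v(x)=-\Lambda\log|x|+o(\log|x|)\qquad\text{as }|x|\to\infty,
\]
which I would obtain from $Qe^{nu}\in L^1(\mr^n)$ by splitting the domain into $\{|y|\le|x|/2\}$, $\{|x-y|\le|x|/2\}$ and the complementary region, where the kernel $\log\frac{|y||x|}{|x-y|}$ is respectively bounded, integrable against the curvature measure, and small. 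In particular $e^{nv(x)}$ is comparable to $|x|^{-n\Lambda}$ for large $|x|$, and in any case $v(x)=O(\log|x|)$.

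For the implication ``normal $\Rightarrow$ $\tau(g)$ finite'' I would take $h\equiv C$, so $e^{nu}=e^{nC}e^{nv}$ is comparable to $|x|^{-n\Lambda}$ at infinity. Passing to polar coordinates, $V_g(B_R(0))=\int_{B_R(0)}e^{nu}\,\ud x$ behaves, up to a bounded term and lower-order factors, like $\int_1^R r^{\,n-1-n\Lambda}\,\ud r$; this grows like $R^{\,n(1-\Lambda)}$ when $\Lambda<1$ and like $\log R$ when $\Lambda=1$. Completeness forces $\Lambda\le1$, since $\int^\infty r^{-\Lambda}\,\ud r=\infty$ is exactly what makes rays have infinite $g$-length. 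In either case $\log V_g(B_R(0))=n(1-\Lambda)\log R+o(\log R)$, and dividing by $\log|B_R(0)|=n\log R+O(1)$ gives $\tau(g)=1-\Lambda$; this is finite and establishes the displayed formula at once.

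For the converse I would argue by contraposition: assuming $g$ is not normal, I show $\tau(g)=\infty$. The crucial input is that $h$ is a polynomial, so non-constancy means $d:=\deg h\ge1$; let $p_d$ be its top-degree homogeneous part. If $p_d(\omega_0)<0$ for some $\omega_0\in\mathbb{S}^{n-1}$, then along the ray $r\mapsto r\omega_0$ one has $h\to-\infty$ polynomially while $v=O(\log r)$, so $\int_0^\infty e^{u(r\omega_0)}\,\ud r<\infty$ and the ray has finite $g$-length, contradicting completeness; hence $p_d\ge0$ on $\mathbb{S}^{n-1}$. This forces $d$ to be even and $p_d\ge\delta>0$ on a nonempty open cone $\Gamma$, where $h(x)\ge\tfrac{\delta}{2}|x|^d$ for large $|x|$. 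Combined with $e^{nv}\ge c|x|^{-n\Lambda}$, this yields $e^{nu}(x)\ge c|x|^{-n\Lambda}e^{\,\frac{n\delta}{2}|x|^d}$ on $\Gamma$, so that
\[
V_g(B_R(0))\ge\int_{\Gamma\cap B_R(0)}e^{nu}\,\ud x\ge c'\,e^{\,c''R^d}
\]
for $R$ large. Therefore $\log V_g(B_R(0))/\log R\to\infty$ and $\tau(g)=\infty$, which is the contrapositive of ``$\tau(g)$ finite $\Rightarrow g$ normal''.

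The step I expect to be the main obstacle---and the essential new content beyond the even-dimensional case treated in \cite{Li 23 Q-curvature}---is proving that $h=u-v$ is a polynomial when $n$ is odd, where $(-\Delta)^{n/2}$ is non-local and the classical Liouville theorem for polyharmonic functions is unavailable. I would handle this through the definition of the fractional operator in Section~\ref{sec:normal metrics}: one checks $(-\Delta)^{n/2}h=0$ in the appropriate distributional sense and that $h$ has at most polynomial growth (inherited from the $L^1$ bound on $Qe^{nu}$ via the asymptotics of $v$), and then invokes a Liouville-type classification identifying such $h$ with polynomials of bounded degree. A secondary technical point is making the two-sided control $e^{nv}\sim|x|^{-n\Lambda}$ precise enough---particularly the lower bound on the cone $\Gamma$---to drive the super-polynomial volume estimate above.
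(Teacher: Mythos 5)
Your overall architecture matches the paper's: the same splitting $u=v+h$ with $v=\mathcal{L}(Qe^{nu})$, the same dichotomy on the sign of the top homogeneous part of $h$, and the same ray/cone arguments pitting completeness against volume growth. However, the step you yourself flag as ``the main obstacle''---showing that $h$ is a polynomial when $n$ is odd---is precisely the new content of this theorem, and your proposal does not supply the idea that makes it work. Invoking ``a Liouville-type classification'' for distributional solutions of the non-local equation $(-\Delta)^{n/2}h=0$ with polynomial growth is not available off the shelf. The paper's device is to compose with $(-\Delta)^{1/2}$ once more, using $(-\Delta)^{1/2}\circ(-\Delta)^{1/2}=-\Delta$, so that $P=u-\mathcal{L}(Qe^{nu})$ satisfies the \emph{integer-order} polyharmonic equation $\Delta^{[\frac{n+1}{2}]}P=0$; one then applies Armitage's Liouville theorem, whose hypothesis is the one-sided average bound $\int_{B_R}P^{+}\,\ud x=o(R^{k+n+1})$. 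That bound is extracted from the finiteness of $\tau(g)$ via Jensen/H\"older (giving $\int_{B_R}u^{+}\le CR^{n+\frac12}$) together with $\int_{B_R}|\mathcal{L}(Qe^{nu})|=O(R^n\log R)$---not, as you suggest, from ``polynomial growth inherited from the $L^1$ bound on $Qe^{nu}$.'' This also exposes a structural flaw in your contrapositive: under the bare assumption ``$g$ not normal'' you have no right to assert $h$ is a polynomial at all; the argument must be run as a contradiction with $\tau(g)$ finite assumed throughout, since that is where the growth input comes from.

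The second genuine issue is your repeated use of pointwise two-sided control $e^{nv}\sim|x|^{-n\Lambda}$. This is false in general: by the paper's Lemma \ref{lem: L(f)}, $v(x)=(-\alpha_0+o(1))\log|x|+\frac{2}{(n-1)!|\mathbb{S}^n|}\int_{B_1(x)}\log\frac{1}{|x-y|}Qe^{nu}\,\ud y$, and the near-field term is neither bounded above nor below pointwise (it blows up to $+\infty$ where positive curvature concentrates and to $-\infty$ where negative curvature concentrates). Consequently your pointwise lower bound $e^{nv}\ge c|x|^{-n\Lambda}$ on the cone $\Gamma$, your claim $v=O(\log r)$ along the ray in the finite-length argument, and your polar-coordinate evaluation of $V_g(B_R(0))$ all need to be replaced by averaged statements: Jensen's inequality applied to $\fint_{B}n\mathcal{L}(Qe^{nu})$ over balls for the volume lower bounds (Lemmas \ref{lem: B_1 L(f)}--\ref{lem: B_|x|^p L(f)}), the annulus estimate $\int_{B_{R+1}\setminus B_{R-1}}e^{n\mathcal{L}(Qe^{nu})}=R^{n-1-n\alpha_0+o(1)}$ for the volume upper bound (Lemma \ref{lem: e^nLf on B_R+1 B_R-1}), and the Fubini--Jensen estimate $\int_i^{i+1}e^{\mathcal{L}(Qe^{nu})(\gamma(t))}\ud t\le i^{-\alpha_0+o(1)}$ for the ray (Lemma \ref{lem: line e^u}). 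These substitutions are exactly what the paper's technical lemmas provide, and with them your outline closes; without them several of your displayed inequalities are not justified as written.
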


With the help of the above theorem, we believe that we have a relatively complete description of normal metrics. Now, it is natural to address non-normal metrics.  For $n=2$, Huber \cite{Hu}(See also \cite{Finn}) showed that all complete metrics with finite total Q-curvature are normal. However, for $n\geq 3$, the situation becomes very different. In fact,  it is not hard  to construct  non-normal  complete metrics with finite total Q-curvature on $\mr^n$  when $n\geq 3$. 
 For example, choose the smooth function
 \begin{equation}\label{non-normal example}
 	u(x)=-\beta\log(|x|^2+1)+|x|^2
 \end{equation}
 where $\beta \in \mr$. For $n\geq 3$, it is not hard to check that the metric $g=e^{2u}|dx|^2$ is complete, $(-\Delta)^{\frac{n}{2}}u\in L^1(\mr^n)$ and the integral of Q-curvature satisfies
$$\int_{\mr^n}Qe^{nu}\ud x=(n-1)!|\ms^n|\beta.$$
This example also indicates that if we permit the complete metrics to be non-normal, we will lose control over the integral of the Q-curvature. It is natural to inquire whether we can achieve other  geometric description for non-normal complete metrics with finite total Q-curvature.

As  mentioned earlier, Chang, Qing, and Yang \cite{CQY} showed that the metric 
$g$ is normal if the scalar curvature  $R_g\geq 0$ near infinity. For non-normal metrics as given in \eqref{non-normal example}, the scalar curvature $R_g$ has a uniform lower bound via  a direct computation
$$R_g=2(n-1)e^{-2u}(-\Delta u-\frac{n-2}{2}|\nabla u|^2)\geq -Ce^{-2|x|^2}(|x|^2+1)^{1+2\beta}\geq -C.$$
  Therefore, it is natural to ask: if we replace the condition 
$R_g\geq 0$ near infinity with that  $R_g$
has a uniform lower bound, can we obtain some interesting geometric properties of non-normal metrics?  Theorem \ref{thm: tau(g)} shows that volume growth is a significant geometric property related to the structure of metrics. Therefore, it is helpful  to consider volume growth for non-normal metrics. However, for such metrics, $\tau(g)$ must be infinite. Consequently, we need an alternative  volume entropy to address non-normal metrics. So we  
introduce the following  volume entropy defined as 
$$h(g):=\inf\{s|\lim_{R\to\infty}\sup \frac{\log V_g(B_R(0))}{R^s}<+\infty,\quad s\geq 0 \}$$
with the  convention $\inf\emptyset=+\infty$. With  help of this new volume entropy $h(g)$, we  provide some volume growth control for non-normal complete metrics under the assumption that the scalar curvature is bounded from below.

\begin{theorem}\label{thm: h(g) leq n-2}
	Consider a  smooth and complete metric $g=e^{2u}|dx|^2$ on $\mr^n$  with finite total Q-curvature where the integer $n\geq 3$. Suppose that the scalar curvature $R_g\geq -C$ for some constant $C>0$. Then  the volume entropy $h(g)$ is an even integer  satisfying 
		$$0\leq h(g)\leq n-1.$$
\end{theorem}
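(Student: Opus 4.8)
The plan is to reduce the entire question to the behaviour of a single polynomial. First I would invoke the decomposition underlying Theorem \ref{thm: tau(g)}: setting
\[
v(x)=\frac{2}{(n-1)!|\ms^n|}\int_{\mr^n}\log\frac{|y|}{|x-y|}Q(y)e^{nu(y)}\ud y,
\]
the finite total Q-curvature hypothesis gives $(-\Delta)^{\frac n2}(u-v)=0$, and the completeness together with the growth control established earlier shows that $P:=u-v$ is a polynomial with $d:=\deg P\leq n-1$. I would also record the asymptotics $v(x)=-\alpha\log|x|+o(\log|x|)$ with $\alpha=\frac{2}{(n-1)!|\ms^n|}\int_{\mr^n}Qe^{nu}\ud x$, together with the decay $|\nabla v|=O(|x|^{-1})$ and $\Delta v=o(|x|^{-1})$ at infinity, so that $e^{nv}$ is merely a polynomial weight and $g$ is non-normal exactly when $P$ is non-constant.

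Next I would identify $h(g)$ with the growth degree of $P$. Writing $e^{nu}=e^{nv}e^{nP}$ with $e^{nv}\asymp|x|^{-n\alpha}$ and integrating over $B_R(0)$ yields, up to polynomial-in-$R$ factors,
\[
\log V_g(B_R(0))=n\max_{|x|\leq R}P(x)+O(\log R).
\]
If $P$ is constant this gives $h(g)=0$; otherwise, once the top-degree homogeneous part $P_d$ of $P$ is shown (see below) to be nonnegative and not identically zero, one has $\max_{|x|\le R}P\asymp R^{d}$, whence $h(g)=d\le n-1$, which is the asserted upper bound.

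The heart of the matter is the parity, and here the scalar-curvature bound enters. Rewriting $R_g\geq -C$ with the formula quoted in the introduction gives
\[
\Delta u+\frac{n-2}{2}|\nabla u|^2\leq \frac{C}{2(n-1)}e^{2u}.
\]
I claim this forces $P_d\ge 0$ everywhere. If instead $P_d(\theta_*)<0$ for some unit vector $\theta_*$, then along the ray $x=r\theta_*$ one has $u\to-\infty$, so the right-hand side tends to $0$; but Euler's identity gives $\theta_*\cdot\nabla P_d(\theta_*)=d\,P_d(\theta_*)\neq 0$, hence $\nabla P_d(\theta_*)\neq 0$, and since $n\geq 3$ the term $\frac{n-2}{2}|\nabla u|^2\asymp \frac{n-2}{2}|\nabla P_d(\theta_*)|^2 r^{2d-2}$ dominates $\Delta u$ and stays bounded away from $0$, contradicting the displayed inequality. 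Thus $P_d\geq 0$. Since a nonzero homogeneous polynomial of odd degree must change sign, $d$ is even, and therefore $h(g)=d$ is an even integer in $[0,n-1]$.

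I expect the main obstacle to be the rigorous justification of the first two steps rather than the parity dichotomy itself: namely, establishing the polynomial decomposition and the \emph{uniform} asymptotics of $v$ and of $\nabla v,\Delta v$ (especially for odd $n$, where $(-\Delta)^{n/2}$ is nonlocal), and upgrading the heuristic $\log V_g(B_R)\approx n\max_{|x|\le R}P$ to genuine two-sided bounds by isolating a solid cone about a direction $\theta_0$ with $P_d(\theta_0)>0$ on which $P\gtrsim R^{d}$. Controlling the error contributions of $v$ uniformly in all directions — so that the single-ray computation in the parity step is legitimate — is the delicate technical point.
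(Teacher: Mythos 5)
Your skeleton (write $u=v+P$ with $v=\mathcal{L}(Qe^{nu})$, show $P$ is a polynomial of degree at most $n-1$, identify $h(g)=\deg P$, and prove the top homogeneous part $P_d$ is nonnegative so that $d$ is even) is the paper's skeleton. But there are two genuine gaps, and in both cases you have the roles of the hypotheses inverted.

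First, the polynomial decomposition does not come from ``the growth control established earlier.'' The growth controls proved for Theorem \ref{thm: tau(g)} and Proposition \ref{lem:decomposition} are upper bounds on $\int_{B_R}u^+$ extracted from the assumption that $\tau(g)$ (resp.\ $h(g)$) is finite --- and finiteness of $h(g)$ is part of what Theorem \ref{thm: h(g) leq n-2} is trying to prove, so invoking it here is circular. A polyharmonic function with no growth control need not be a polynomial (think of $\mathrm{Re}\,e^{z}$ in two variables), so some input is required, and in the paper that input is precisely the scalar curvature bound: via the Chang--Hang--Yang lemma (Lemma \ref{CHY lemma}, giving Lemma \ref{lem: u geq -log|x|}) one gets the pointwise \emph{lower} bound $u(x)\geq -2\log(|x|+1)-C$, hence $\int_{B_R}P^-=o(R^{n+1})$, and Lemma \ref{lem: polynomial growth is polynomial} applied to $\varphi=-P$ yields that $P$ is a polynomial of degree at most $n-1$. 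In your write-up the hypothesis $R_g\geq -C$ is never used at this stage, so the step ``$P$ is a polynomial with $\deg P\leq n-1$'' is unsupported.

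Second, your parity argument rests on pointwise asymptotics $|\nabla v|=O(|x|^{-1})$ and $\Delta v=o(|x|^{-1})$ along a chosen ray, and these are not available: $v$ is the logarithmic potential of a function that is merely in $L^1(\mr^n)$, and only \emph{spherical averages} of $\Delta v$ and $|\nabla v|^2$ are controlled (this is exactly the content of Lemmas \ref{lem: 4.1}--\ref{lem: 4.3}, quoted from \cite{Li 24 revisited}); along a single ray $r\theta_*$ these quantities can be large on a sequence $r_k\to\infty$ if the density $Qe^{nu}$ concentrates there. You flag this as ``the delicate technical point,'' but it is not a technicality --- it is the reason the paper does not argue this way. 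The paper's proof of $H_m\geq 0$ (the claim \eqref{H_m geq 0}) uses only completeness: if $H_m(x_0)<0$ for some unit $x_0$, then $P(tx_0)\leq -c_0t^m$, and the \emph{integrated} ray estimate of Lemma \ref{lem: line e^u}, $\int_i^{i+1}e^{\mathcal{L}(Qe^{nu})(\gamma(t))}\ud t\leq i^{-\alpha_0+\epsilon_0}$, gives $\sum_i\int_i^{i+1}e^{u(\gamma(t))}\ud t\leq\sum_i e^{-c_0i^m}i^{-\alpha_0+\epsilon_0}<\infty$, so the ray has finite $g$-length, contradicting completeness. This is robust precisely because it only needs the potential's behavior in an averaged sense. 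So the correct division of labor is: scalar curvature $\Rightarrow$ polynomial structure; completeness $\Rightarrow$ $H_m\geq 0$ $\Rightarrow$ even degree. Your identification $h(g)=\deg P$ via a solid cone where $H_m\geq c_1>0$ is fine and matches the paper's lower-bound computation with Jensen's inequality and Lemma \ref{lem: B_1 L(f)}.
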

\begin{remark}
	Consider the  complete metrics $g_k=e^{2u_k}|dx|^2$ where  $u_k(x)=|x|^{2k}$ for  the integer $k$ satisfying $1\leq k\leq \frac{n-1}{2}$.  A direct computation yields that 
	$Q_{g_k}\equiv 0$, $R_{g_k}\geq -C$ and the volume entropy $h(g_k)=2k.$ Thus each possible volume entropy can be achieved.
\end{remark}

In fact, we are able to give a precise decomposition for such complete metrics with scalar curvature bounded from below.

\begin{theorem}\label{thm: normal decomposition}
	Consider a   smooth and complete  metric $g=e^{2u}|dx|^2$ on $\mr^n$  with finite total Q-curvature where the integer $n\geq 3$. Suppose that the scalar curvature $R_g\geq -C$ for some constant $C>0$. Then $u(x)$ has the following decomposition
	$$u(x)=\frac{2}{(n-1)!|\mathbb{S}^n|}\int_{\mr^n}\log\frac{|y|}{|x-y|}Q(y)e^{nu(y)}\ud y+P(x)$$
	where $P(x)$ is a polynomial with even degree  $\deg (P)\leq n-1$ and 
	$$P(x)\geq -C\log(|x|+2)$$
	for some constant $C>0.$
\end{theorem}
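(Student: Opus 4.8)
The plan is to split $u$ into its Green's potential and a polyharmonic remainder, to show that the curvature hypothesis forces the remainder to be a polynomial, and then to read off its degree and sign from the resulting lower bound. I set
$$v(x):=\frac{2}{(n-1)!|\mathbb{S}^n|}\int_{\mathbb{R}^n}\log\frac{|y|}{|x-y|}Q(y)e^{nu(y)}\,dy,$$
which is well defined because $Qe^{nu}\in L^1(\mathbb{R}^n)$ and which satisfies $(-\Delta)^{n/2}v=Qe^{nu}$ in the sense appropriate to the operator (distributional for even $n$, the nonlocal sense of Section \ref{sec:normal metrics} for odd $n$). A standard estimate for logarithmic potentials with $L^1$ density gives $|v(x)|\le C\log(|x|+2)$. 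Since $u$ solves \eqref{equ:conformal eqution}, the remainder $P:=u-v$ then satisfies $(-\Delta)^{n/2}P=0$ on $\mathbb{R}^n$; proving that this polyharmonic $P$ is actually a polynomial with the stated degree and lower bounds is the whole content of the theorem.

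The key analytic input is the pointwise bound $u(x)\ge -C\log(|x|+2)$, from which $P\ge u-|v|\ge -C\log(|x|+2)$ follows via the potential estimate. To obtain it I would exploit the scalar curvature through the conformal change of the Yamabe operator: with $\phi:=e^{\frac{n-2}{2}u}>0$ one has
$$-\Delta\phi=\frac{n-2}{4(n-1)}R_g\,\phi^{\frac{n+2}{n-2}},$$
so the hypothesis $R_g\ge -C$ is equivalent to the one-sided differential inequality $\Delta\phi\le A\phi^{\frac{n+2}{n-2}}$ for some $A>0$. Completeness of $g=\phi^{4/(n-2)}|dx|^2$ forces $\int_1^\infty\big(\inf_{|x|=\rho}e^{u}\big)\,d\rho=\infty$, which rules out $u$ decaying faster than logarithmically in an averaged sense; the differential inequality then upgrades this averaged statement to the pointwise bound $\phi(x)\ge c(1+|x|)^{-N}$, equivalently $u(x)\ge -C\log(|x|+2)$, by a comparison argument that forbids $u$ from developing deep localized minima in regions where $e^{2u}$ is small.

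With the one-sided bound $P\ge -C\log(|x|+2)$ in hand, I would invoke the Liouville-type theorem for polyharmonic functions of controlled growth (the central tool of this series) to conclude that $P$ is a polynomial. Since $P$ also satisfies $(-\Delta)^{n/2}P=0$ and the operator of symbol $|\xi|^n$ annihilates a polynomial precisely when its degree does not exceed $n-1$, we obtain $\deg P\le n-1$. The parity is then immediate: writing $P=P_d+(\text{lower order})$ with $P_d$ the leading homogeneous part of degree $d\ge1$, the bound $P(t\omega)\ge -C\log(t+2)$ along each ray $x=t\omega$ forces $P_d\ge0$ on $\mathbb{S}^{n-1}$, whereas a nonzero homogeneous polynomial of odd degree satisfies $P_d(-\omega)=-P_d(\omega)$ and hence changes sign; therefore $d$ must be even, which together with the degree bound completes the decomposition.

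The main obstacle is the second paragraph, namely converting the curvature lower bound and completeness into the pointwise estimate $u\ge -C\log(|x|+2)$: the inequality $\Delta\phi\le A\phi^{(n+2)/(n-2)}$ controls $\phi$ only from one side, so a naive maximum principle is unavailable, and the averaged consequence of completeness must be promoted to a pointwise bound with constants uniform in $|x|$. For odd $n$ there is the additional difficulty that $(-\Delta)^{n/2}$ is nonlocal, so both the identity $(-\Delta)^{n/2}v=Qe^{nu}$ and the Liouville step must be executed within the extension/nonlocal framework of Section \ref{sec:normal metrics} rather than by classical elliptic theory.
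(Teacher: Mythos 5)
Your overall architecture matches the paper's: set $P=u-\mathcal{L}(Qe^{nu})$, use the scalar curvature lower bound plus completeness to get $u(x)\ge -C\log(|x|+2)$, invoke the polyharmonic Liouville theorem to make $P$ a polynomial, and read off evenness from nonnegativity of the leading homogeneous part. But three steps as you state them do not go through. First, the ``standard estimate'' $|\mathcal{L}(Qe^{nu})(x)|\le C\log(|x|+2)$ is false pointwise: for an $L^1$ density the local term $\int_{B_1(x)}\log\frac{1}{|x-y|}f(y)\,dy$ in Lemma \ref{lem: L(f)} can be unbounded above along a sequence $|x_k|\to\infty$ if $f^+$ concentrates at small scales near $x_k$, so only one-sided and \emph{averaged} bounds (Lemmas \ref{lem: B_1 L(f)}--\ref{lem: B_|x|^p L(f)}, \ref{lem: B_R(0)Lf}) are available. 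Consequently your derivation $P\ge u-|v|\ge -C\log(|x|+2)$ collapses. The paper avoids this by a different order of operations: it first proves $P$ is a polynomial of degree $\le n-1$ using only the \emph{integrated} bounds $\int_{B_R}u^-=o(R^{n+1})$ and $\int_{B_R}|\mathcal{L}(Qe^{nu})|=O(R^n\log R)$, then exploits $|\nabla P|\le C(1+|x|)^{n-2}$ to replace $P(x)$ by its average over the tiny ball $B_{|x|^{2-n}}(x)$, where the averaged potential estimate of Lemma \ref{lem: B_|x|^p L(f)} and Lemma \ref{lem: u geq -log|x|} apply; only then does the pointwise bound $P(x)\ge -C\log(|x|+2)$ emerge. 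Second, the degree bound cannot come from the claim that ``$(-\Delta)^{n/2}$ annihilates a polynomial precisely when its degree does not exceed $n-1$'': for even $n$ this is false, since $\Delta^{n/2}$ kills harmonic polynomials of arbitrary degree. The bound $\deg P\le n-1$ comes from the quantitative Liouville lemma (Lemma \ref{lem: polynomial growth is polynomial}) applied to $\varphi=-P$ with the growth rate $o(R^{0+n+1})$, giving $\deg P\le\max\{2[\frac{n+1}{2}]-2,\,0\}\le n-1$.

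Third, the step you yourself flag as the main obstacle --- upgrading $R_g\ge -C$ and completeness to the pointwise bound $u\ge -C\log(|x|+2)$ via the Yamabe equation and an unspecified comparison argument --- is not carried out, and as you note the one-sided inequality $\Delta\phi\le A\phi^{(n+2)/(n-2)}$ gives no maximum principle. The paper obtains this bound as Lemma \ref{lem: u geq -log|x|} by pulling the metric back to $\mathbb{S}^n\setminus\{N\}$ under stereographic projection and quoting the Chang--Hang--Yang result (Lemma \ref{CHY lemma}): completeness and $R\ge -C$ force the conformal factor relative to $g_{\mathbb{S}^n}$ to blow up at the puncture, i.e.\ $u(x)+\log\frac{|x|^2+1}{2}\to\infty$, which is exactly $u\ge -2\log(|x|+1)-C$. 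Your evenness argument (a nonnegative leading homogeneous part must have even degree) is correct once the pointwise lower bound on $P$ is in hand, and agrees with the paper's; but without the three repairs above the proposal does not constitute a proof.
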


If we introduce additional control over the scalar curvature, we can achieve more precise control over the volume growth, leveraging the results established in \cite{Li 24 revisited} and Theorem \ref{thm: tau(g)}.
\begin{theorem}\label{thm: tua(g) leq 1}
	Consider a  smooth and complete metric $g=e^{2u}|dx|^2$ on $\mr^n$  with finite total Q-curvature where the integer $n\geq 3$. Suppose that the scalar curvature $R_g\geq 0$ near infinity. Then  the volume entropy  
	$$ \tau(g)\leq 1.$$
\end{theorem}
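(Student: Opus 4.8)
The plan is to combine the two cited inputs to reduce the theorem to a single sign condition, and then to read off that sign from the scalar curvature hypothesis via a superharmonicity argument.

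\textbf{Step 1 (reduction to the sign of the total Q-curvature).} Since $R_g\geq 0$ near infinity, the Chang--Qing--Yang criterion forces $g$ to be normal. Hence Theorem \ref{thm: tau(g)} applies: $\tau(g)$ is finite and
$$\tau(g)=1-\frac{2}{(n-1)!|\ms^n|}\int_{\mr^n}Qe^{nu}\,\ud x.$$
Writing $\alpha:=\frac{2}{(n-1)!|\ms^n|}\int_{\mr^n}Qe^{nu}\,\ud x$, the inequality $\tau(g)\leq 1$ is exactly equivalent to $\alpha\geq 0$, i.e.\ to the non-negativity of the total Q-curvature. So the entire task becomes proving $\alpha\geq 0$, and this is the place where I expect to lean on the asymptotic machinery of \cite{Li 24 revisited}.

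\textbf{Step 2 (logarithmic growth rate of $u$ from normality).} Because $g$ is normal, $u$ coincides with the potential in \eqref{integral equation} up to a constant. I would extract from this representation the averaged growth rate
$$\lim_{r\to\infty}\frac{1}{\log r}\,\fint_{\partial B_r(0)}u\,\ud\sigma=-\alpha ,$$
so that, denoting $\bar u(r):=\fint_{\partial B_r(0)}u\,\ud\sigma$, the hypothesis $\alpha<0$ (which I aim to rule out) would force $\bar u(r)\to+\infty$ at the rate $|\alpha|\log r$.

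\textbf{Step 3 (the lower bound on $R_g$ forbids growth).} Set $v:=e^{\frac{n-2}{2}u}$. The conformal formula for the scalar curvature, together with the identity displayed in the introduction, gives
$$\Delta v=\tfrac{n-2}{2}e^{\frac{n-2}{2}u}\Big(\Delta u+\tfrac{n-2}{2}|\nabla u|^2\Big)=-\tfrac{n-2}{4(n-1)}e^{\frac{n+2}{2}u}R_g ,$$
so $R_g\geq 0$ on $\{|x|\geq R_0\}$ means precisely that $v$ is superharmonic there. Averaging $\Delta v\leq 0$ over spheres shows that $r^{n-1}\bar v'(r)$ is non-increasing, hence bounded above by a constant; integrating once more and using $n\geq 3$ (so that $\int^\infty s^{1-n}\,\ud s<\infty$) shows that the spherical average $\bar v(r)$ stays bounded above as $r\to\infty$. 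By Jensen's inequality $\bar v(r)\geq e^{\frac{n-2}{2}\bar u(r)}$, so $\bar u(r)$ is bounded above — contradicting Step 2 whenever $\alpha<0$. Therefore $\alpha\geq 0$, which is $\tau(g)\leq 1$.

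The elementary pieces (the conformal identity, the monotonicity of $r^{n-1}\bar v'(r)$, and Jensen) are routine. The genuine obstacle is Step 2: making rigorous the averaged logarithmic asymptotics of $u$ directly from the potential, i.e.\ controlling the singular integral $\int_{\mr^n}\log\frac{|y|}{|x-y|}Qe^{nu}\,\ud y$ uniformly as $|x|\to\infty$ knowing only $Qe^{nu}\in L^1(\mr^n)$. This is exactly the type of estimate that underlies the growth formula in Theorem \ref{thm: tau(g)} and that is developed in \cite{Li 23 Q-curvature, Li 24 revisited}, so in practice I would quote it rather than reprove it, and concentrate the new argument on the superharmonicity step that converts the scalar curvature sign into the sign of $\alpha$.
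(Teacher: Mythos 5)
Your argument is correct, and it reaches the key inequality $\alpha_0\geq 0$ by a genuinely different route from the paper. Both proofs share your Step 1: normality from $R_g\geq 0$ near infinity (Proposition \ref{prop}) and the identity $\tau(g)=1-\alpha_0$ from Theorem \ref{thm: tau(g)}, so everything reduces to showing that the total Q-curvature is non-negative. The paper then averages the pointwise identity $R_g=2(n-1)e^{-2u}(-\Delta u-\frac{n-2}{2}|\nabla u|^2)$ over spheres and invokes the two refined asymptotics of \cite{Li 24 revisited} (Lemmas \ref{lem: 4.1} and \ref{lem: 4.2}), namely that $r^2$ times the spherical averages of $(-\Delta)\mathcal{L}(Qe^{nu})$ and of $|\nabla\mathcal{L}(Qe^{nu})|^2$ converge to $(n-2)\alpha_0$ and $\alpha_0^2$ respectively, to obtain the algebraic inequality $(n-2)\alpha_0-\frac{n-2}{2}\alpha_0^2\geq 0$, hence $0\leq\alpha_0\leq 2$, and then $\alpha_0\leq 1$ via Corollary \ref{cor:normal metric}. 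You instead exponentiate: $v=e^{\frac{n-2}{2}u}$ satisfies $\Delta v=-\frac{n-2}{4(n-1)}e^{\frac{n+2}{2}u}R_g\leq 0$ outside a compact set (the computation checks out), so the monotonicity of $r^{n-1}\frac{\ud}{\ud r}\bar v(r)$ together with $n\geq 3$ bounds the spherical average $\bar v(r)$ from above, and Jensen converts this into an upper bound on $\bar u(r)$; this contradicts the $(-\alpha_0+o(1))\log r$ asymptotics of the averaged potential (Lemma \ref{lem: 4.3}, which the paper itself quotes, for Theorem \ref{thm: tau(g) =0}) unless $\alpha_0\geq 0$. Your route needs only the zeroth-order asymptotics of $\mathcal{L}(Qe^{nu})$ rather than the more delicate decay rates of its averaged Laplacian and squared gradient, so it is more elementary and essentially self-contained given normality; what it gives up is the quantitative inequality $(n-2)\alpha_0-\frac{n-2}{2}\alpha_0^2\geq 0$, which is the engine the paper reuses in sharpened form to prove Theorem \ref{thm: tau(g) =0}, where strict positivity of $R_g$ must force $\alpha_0=1$ --- your superharmonicity argument alone would not yield that.
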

\begin{remark}
Clearly, one can verify that the standard metric  $|dx|^2$ meets all the  conditions needed, and a direct computation shows that $\tau(g)$  equals one. Therefore, the upper bound of $\tau(g)$  is sharp. However, it does not imply rigidity unless additional constraints on the sign of the Q-curvature are imposed (see Corollary 1.3 in \cite{Li 23 Q-curvature}).
\end{remark}
\begin{theorem}\label{thm: tau(g) =0}
	Consider a  smooth and complete metric $g=e^{2u}|dx|^2$ on $\mr^n$  with finite total Q-curvature where the integer $n\geq 3$. Suppose that the scalar curvature $R_g\geq C$  near infinity for some constant $C>0$. Then  the volume entropy 
	$$\tau(g)= 0.$$
\end{theorem}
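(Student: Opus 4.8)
The plan is to combine the entropy formula of Theorem \ref{thm: tau(g)} with an integral estimate that exploits the strict positivity of $R_g$. Since $R_g\geq C>0$ near infinity in particular forces $R_g\geq 0$ near infinity, the Chang--Qing--Yang criterion \cite{CQY} shows that $g$ is normal; hence the representation \eqref{integral equation} holds and, by Theorem \ref{thm: tau(g)}, the entropy is finite with
$$\tau(g)=1-\alpha,\qquad \alpha:=\frac{2}{(n-1)!|\mathbb{S}^n|}\int_{\mr^n}Qe^{nu}\ud x.$$
Moreover $\tau(g)\geq 0$ always holds: $V_g(B_R(0))$ is nondecreasing and bounded below by $V_g(B_1(0))>0$, whereas $\log|B_R(0)|\to+\infty$, so the quotients have nonnegative lower limit. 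Thus $\alpha\leq 1$, and it suffices to prove $\alpha\geq 1$, which then yields $\alpha=1$ and $\tau(g)=0$.

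To obtain $\alpha\geq 1$ I would argue by contradiction, assuming $\alpha<1$. The starting point is the conformal law
$$R_ge^{2u}=2(n-1)\Big(-\Delta u-\frac{n-2}{2}|\nabla u|^2\Big),$$
which, integrated over $B_R(0)$, gives the upper bound
$$\int_{B_R(0)}R_ge^{2u}\ud x=2(n-1)\int_{B_R(0)}(-\Delta u)\ud x-(n-1)(n-2)\int_{B_R(0)}|\nabla u|^2\ud x\leq 2(n-1)\int_{B_R(0)}(-\Delta u)\ud x,$$
where the nonpositive gradient term is discarded. Differentiating the potential \eqref{integral equation} gives, for $n\geq 3$,
$$-\Delta u(x)=\frac{2(n-2)}{(n-1)!|\mathbb{S}^n|}\int_{\mr^n}\frac{Q(y)e^{nu(y)}}{|x-y|^2}\ud y,$$
and Fubini together with the elementary uniform bound $\int_{B_R(0)}|x-y|^{-2}\ud x\leq C R^{n-2}$ (valid for every $y$, by treating $|y|\leq 2R$ and $|y|>2R$ separately) and $\int_{\mr^n}|Q|e^{nu}\ud x<\infty$ yields $\int_{B_R(0)}(-\Delta u)\ud x\leq CR^{n-2}$. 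Hence the scalar curvature mass satisfies $\int_{B_R(0)}R_ge^{2u}\ud x\leq C R^{n-2}$.

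The strict lower bound on $R_g$ forces the reverse growth. For a normal metric the potential representation produces the asymptotics $u(x)/\log|x|\to-\alpha$ as $|x|\to\infty$, which is the decay analysis underlying \cite{Li 24 revisited}. Fixing $\epsilon>0$ with $\alpha+\epsilon<1$, one then has $e^{2u(x)}\geq |x|^{-2(\alpha+\epsilon)}$ for $|x|$ large, so using $R_g\geq C>0$ on $\{|x|\geq R_0\}$,
$$\int_{B_R(0)}R_ge^{2u}\ud x\geq C\int_{R_0\leq|x|\leq R}|x|^{-2(\alpha+\epsilon)}\ud x-C'\geq c\,R^{\,n-2(\alpha+\epsilon)}-C'.$$
Since $\alpha+\epsilon<1$, the exponent $n-2(\alpha+\epsilon)$ strictly exceeds $n-2$, so for $R$ large this contradicts the bound $O(R^{n-2})$ of the previous step. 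Therefore $\alpha\geq 1$, and combined with $\alpha\leq 1$ we conclude $\alpha=1$, i.e.\ $\tau(g)=0$.

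The main obstacle is the asymptotic input $u(x)/\log|x|\to-\alpha$ for normal potentials, together with the justification of differentiation under the integral sign and of the Fubini interchange; these rest on the fine decay properties of normal solutions established in \cite{Li 24 revisited} and used in the proof of Theorem \ref{thm: tau(g)}. Once those facts are granted, the remaining estimates are elementary, and the comparison of the two growth rates closes the argument.
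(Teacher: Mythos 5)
Your proof follows the paper for the first half: normality from $R_g\ge 0$ near infinity, the formula $\tau(g)=1-\alpha$ from Theorem \ref{thm: tau(g)}, and $\alpha\le 1$ (the paper quotes Corollary \ref{cor:normal metric}; your observation that $\tau(g)\ge 0$ always holds is an acceptable substitute). For the key step $\alpha\ge 1$ you take a genuinely different route. The paper compares \emph{weighted spherical averages}: by Lemma \ref{lem: 4.1} and Lemma \ref{lem: 4.2} the quantity $r^2\cdot\frac{1}{|\partial B_r(0)|}\int_{\partial B_r(0)}(-\Delta u-\frac{n-2}{2}|\nabla u|^2)\,\ud\sigma$ converges to $(n-2)\alpha_0-\frac{n-2}{2}\alpha_0^2$ and hence stays bounded, while $R_g\ge C>0$ together with Jensen's inequality and Lemma \ref{lem: 4.3} forces it to be at least $Cr^{2-2\alpha_0+o(1)}$. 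You instead integrate $R_ge^{2u}$ over balls, discard the gradient term by its sign, and bound $\int_{B_R(0)}|\Delta u|\,\ud x\le CR^{n-2}$ by Fubini; this bypasses the delicate asymptotic for $|\nabla\mathcal{L}(Qe^{nu})|^2$ (Lemma \ref{lem: 4.2}) entirely and is in that respect more elementary. The exponent comparison $n-2(\alpha+\epsilon)>n-2$ then closes the argument just as the paper's $2-2\alpha_0>0$ does.

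There is, however, one step that fails as stated: the pointwise asymptotic $u(x)/\log|x|\to-\alpha$, from which you deduce $e^{2u(x)}\ge|x|^{-2(\alpha+\epsilon)}$ for all large $|x|$. A normal solution satisfies, by Lemma \ref{lem: L(f)},
\begin{equation*}
u(x)=(-\alpha+o(1))\log|x|+\frac{2}{(n-1)!|\mathbb{S}^n|}\int_{B_1(x)}\log\frac{1}{|x-y|}Q(y)e^{nu(y)}\,\ud y+C,
\end{equation*}
and the local term is not $o(\log|x|)$ pointwise in general: concentrated negative mass of $Qe^{nu}$ near points tending to infinity can drive $u$ far below $-\alpha\log|x|$ there. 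This is precisely why the paper (and \cite{Li 24 revisited}) only ever asserts averaged versions (Lemmas \ref{lem: B_1 L(f)}--\ref{lem: B_|x|^p L(f)} and Lemma \ref{lem: 4.3}). The gap is repairable without changing your architecture: you only need a lower bound on $\int_{R_0\le|x|\le R}e^{2u}\,\ud x$, and Jensen's inequality on spheres combined with Lemma \ref{lem: 4.3} gives
\begin{equation*}
\int_{\partial B_r(0)}e^{2u}\,\ud\sigma\ \ge\ |\partial B_r(0)|\exp\Bigl(\frac{2}{|\partial B_r(0)|}\int_{\partial B_r(0)}u\,\ud\sigma\Bigr)\ =\ Cr^{n-1}\cdot r^{-2\alpha+o(1)},
\end{equation*}
which upon integration in $r$ yields $\int_{R_0\le|x|\le R}e^{2u}\,\ud x\ge cR^{n-2\alpha-\epsilon'}$ for large $R$. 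Substituting this for your pointwise bound, the contradiction with $\int_{B_R(0)}R_ge^{2u}\,\ud x\le CR^{n-2}$ goes through and the proof is complete.
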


This  paper is organized as follows. In Section \ref{sec:normal metrics}, we generalize the result in \cite{Li 23 Q-curvature} to all dimensional cases and prove Theorem \ref{thm: tau(g)}. In Section \ref{section:non-normal metrics}, we address non-normal complete metrics and prove Theorem \ref{thm: h(g) leq n-2} and Theorem \ref{thm: normal decomposition}. Finally, in Section \ref{sec: finite tau(g)}, we prove Theorem \ref{thm: tua(g) leq 1} and Theorem \ref{thm: tau(g) =0}.

\hspace{3em}

{\bf Acknowledgment.} Part of this work was completed during the author's visits to Great Bay University and Hangzhou Dianzi University. The author is grateful  for their  hospitality. Additionally, the author would like to thank Dr. Zhehui Wang for his helpful discussions.

\section{Normal complete metrics}\label{sec:normal metrics}

As mentioned in the introduction, when the integer $n$  is odd, the  operator $(-\Delta)^{\frac{n}{2}}$ is  non-local and interpreted as
$$(-\Delta)^{\frac{n}{2}}=(-\Delta)^{\frac{1}{2}}\circ(-\Delta)^{\frac{n-1}{2}}.$$
 Given a smooth function $f(x)$ belong to $L_{1/2}(\mr^n)$ which is defined by
$$L_{1/2}(\mr^n):=\{\varphi\in L^1_{loc}(\mr^n)| \int_{\mr^n}\frac{|\varphi(x)|}{1+|x|^{n+1}}\ud x<+\infty\}.$$
Then  $(-\Delta)^{\frac{1}{2}}f$  can be written as
$$(-\Delta)^{\frac{1}{2}}f(x):=C(n)P.V.\int_{\mr^n}\frac{f(x)-f(y)}{|x-y|^{n+1}}\ud y$$
where the right-hand side is defined in the sense of the principal value and $C(n)$ is a constant depending on $n$.  It can also be defined as pseudo-differential operator via the Fourier transform
\begin{equation}\label{Fourier transform}
	\widehat{(-\Delta)^{\frac{1}{2}}f}(\xi)=|\xi|\hat{f}(\xi).
\end{equation}
With help of such an  equivalent definition  \eqref{Fourier transform}, the following identity holds
$$\widehat{(-\Delta)^{\frac{1}{2}}\circ(-\Delta)^{\frac{1}{2}}f}(\xi)=|\xi|\widehat{(-\Delta)^{\frac{1}{2}}f}(\xi)=|\xi|^2\hat{f}(\xi)=\widehat{(-\Delta)f}(\xi).$$
Thus, a  property which will be used later is obtained as follows
\begin{equation}\label{Delta 1/2 circ 1/2=1}
	(-\Delta)^{\frac{1}{2}}\circ(-\Delta)^{\frac{1}{2}}f=(-\Delta) f.
\end{equation}
More equivalent definitions of the fractional  operator $(-\Delta)^{s}$,  where $s\in (0,1)$,  can be found in the introduction of \cite{CS}.  More details about fractional Laplacian operator  from an analytic point of view can be found in \cite{CS}, \cite{Silvestre}, and from a geometric point of view  in \cite{CC},  \cite{CG}, \cite{FG},  \cite{GQ} and \cite{GZ}. We also recommend the interested reader to refer to a nice survey \cite{Gon} by Gonz\'alez.

When  the integer $n$ is odd,  to make the equation \eqref{equ:conformal eqution}  well-defined, we need to  assume that  $(-\Delta)^{\frac{n-1}{2}}u$ belongs to $L_{1/2}(\mr^n)$  in addition,  such that  $(-\Delta)^{\frac{n}{2}}u=(-\Delta)^{\frac{1}{2}}\circ(-\Delta)^{\frac{n-1}{2}}u$ is well-defined.  For even integer $n$, it is well known that the Green's function  $G(x,y)$ of the operator $(-\Delta)^{\frac{n}{2}}$ on $\mr^n$ is  
$$G(x,y)=\frac{2}{(n-1)!|\mathbb S^n|}\log\frac{1}{|x-y|}.$$  As for odd integer $n$, the Green's function of the non-local operator  is same as the even cases(See Proposition 2.1 in \cite{NX}). 
Generally, given  a smooth function $f(x)\in L^1(\mr^n)$, we set the logarithmic potential defined as
\begin{equation}\label{mathcal L}
	\mathcal{L}(f)(x):=\frac{2}{(n-1)!|\mathbb{S}^n|}\int_{\mr^n}\log\frac{|y|}{|x-y|}f(y)\ud y
\end{equation}
which satisfies the equation
\begin{equation}\label{delta Lf}
	(-\Delta)^{\frac{n}{2}}\mathcal{L}(f)(x)=f(x).
\end{equation}
Set the notation $\alpha$ as
$$\alpha:=\frac{2}{(n-1)!|\mathbb{S}^n|}\int_{\mr^n}f(y)\ud y.$$

For the reader's convenience, we repeat the statements of some lemmas which have been established in \cite{Li 23 Q-curvature}.

\begin{lemma} \label{lem: L(f)}(Lemma 2.3 in \cite{Li 23 Q-curvature})
	For $|x|\gg1$, there holds
	\begin{equation}\label{Lf=-aplha log x+}
		\mathcal{L}(f)(x)=(-\alpha+o(1))\log|x|+\frac{2}{(n-1)!|\mathbb{S}^n|}\int_{B_1(x)}\log\frac{1}{|x-y|}f(y)\ud y
	\end{equation}
	where  $o(1)\to 0$ as $|x|\to\infty$.
\end{lemma}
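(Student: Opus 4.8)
The plan is to isolate the prescribed local integral over $B_1(x)$ and show the remainder equals $(-\alpha+o(1))\log|x|$. Writing $c_n:=\frac{2}{(n-1)!|\ms^n|}$, set
$$g(x):=\mathcal{L}(f)(x)-c_n\int_{B_1(x)}\log\frac{1}{|x-y|}f(y)\,\ud y,$$
so the goal becomes $g(x)+\alpha\log|x|=o(\log|x|)$. I would split $\mathcal{L}(f)$ over $B_1(x)$ and its complement, use $\log\frac{|y|}{|x-y|}=\log\frac{1}{|x-y|}+\log|y|$ on $B_1(x)$ to cancel the subtracted singular integral, and then distribute $\alpha\log|x|=c_n\log|x|\int_{\mr^n}f\,\ud y$ across the same two regions, which rewrites the target as
$$g(x)+\alpha\log|x|=c_n\int_{B_1(x)}\log(|y||x|)\,f\,\ud y+J(x),\qquad J(x):=c_n\int_{\{|x-y|\ge1\}}\log\frac{|y||x|}{|x-y|}f(y)\,\ud y.$$
The first term is routine: for $y\in B_1(x)$ one has $||y|-|x||<1$, hence $\log(|y||x|)=2\log|x|+O(1/|x|)$ uniformly, and since $f\in L^1(\mr^n)$ the tail mass $\int_{B_1(x)}|f|\,\ud y\to0$ as $|x|\to\infty$, so this term is $o(\log|x|)$. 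Everything then hinges on proving $J(x)=o(\log|x|)$.

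For $J(x)$ I would partition the domain $\{|x-y|\ge1\}$ (which for $|x|\ge2$ contains the regions below) according to the size of $|y|$ relative to $|x|$: an inner region $\{|y|\le|x|/2\}$, an intermediate shell $\{|x|/2<|y|<2|x|\}$, and an outer region $\{|y|\ge2|x|\}$. Writing $W(x,y):=\log\frac{|y||x|}{|x-y|}$, the constraint $|x-y|\ge1$ together with $|x-y|\le|x|+|y|$ gives the crude bound $|W(x,y)|\le C\log|x|$ on the intermediate and outer regions, while the $L^1$ mass of $f$ over these two moving shells tends to $0$; each therefore contributes at most $C\log|x|\cdot o(1)=o(\log|x|)$. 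On the inner region $|x-y|$ is comparable to $|x|$, so $W(x,y)=\log|y|+O(1)$, and the matter reduces to estimating $\int_{1\le|y|\le|x|/2}\log|y|\,|f|\,\ud y$, the part over $\{|y|\le1\}$ contributing only $O(1)$ since $\log|\cdot|$ is locally integrable and $f$ is bounded.

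The main obstacle is exactly this inner estimate, since $f$ is only assumed integrable and no weighted bound such as $\log|y|\,f\in L^1(\mr^n)$ is available. I would resolve it by an $\varepsilon$--$R$ argument: given $\varepsilon>0$, choose $R$ with $\int_{|y|>R}|f|\,\ud y<\varepsilon$; then for $|x|>2R$,
$$\int_{1\le|y|\le|x|/2}\log|y|\,|f|\,\ud y\le(\log R)\,\|f\|_{L^1(\mr^n)}+\varepsilon\log|x|,$$
so that $\limsup_{|x|\to\infty}(\log|x|)^{-1}\int_{1\le|y|\le|x|/2}\log|y|\,|f|\,\ud y\le\varepsilon$; as $\varepsilon$ is arbitrary, this term is $o(\log|x|)$. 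Collecting the three regions yields $J(x)=o(\log|x|)$, and together with the local leftover this gives $g(x)=(-\alpha+o(1))\log|x|$, which is precisely the assertion.
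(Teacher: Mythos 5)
Your proof is correct and takes essentially the standard route: the present paper does not reprove this lemma (it is quoted from Lemma 2.3 of \cite{Li 23 Q-curvature}), and your decomposition into $B_1(x)$, the inner region $\{|y|\le |x|/2\}$, the comparable shell $\{|x|/2<|y|<2|x|\}$ and the outer region $\{|y|\ge 2|x|\}$, combined with the $\varepsilon$--$R$ truncation to show $\int_{1\le |y|\le |x|/2}\log|y|\,|f|\,\ud y=o(\log|x|)$, is exactly the argument used there. One cosmetic remark: where you write that ``$f$ is bounded'' you only need that $f$, being smooth, is bounded on $\overline{B_1(0)}$, which suffices for the local integrability of $\log|y|\,f(y)$ near the origin.
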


\begin{lemma}\label{lem: B_1 L(f)}(Lemma 2.4 in \cite{Li 23 Q-curvature})
	For any $r_0>0$ fixed, there holds
	\begin{equation}\label{B_1(x)Lf}
		\frac{1}{|B_{r_0}(x)|}\int_{B_{r_0}(x)}\mathcal{L}(f)(y)\ud y=(-\alpha+o(1))\log|x|
	\end{equation}
where  $o(1)\to 0$ as $|x|\to\infty$.
\end{lemma}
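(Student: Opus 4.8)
The plan is to feed the pointwise asymptotic of Lemma \ref{lem: L(f)} into the spherical average and to control the two resulting pieces separately. Writing $c_n=\frac{2}{(n-1)!|\ms^n|}$, Lemma \ref{lem: L(f)} furnishes, for every $z$ with $|z|\gg1$,
$$\mathcal{L}(f)(z)=(-\alpha+\epsilon(z))\log|z|+c_n\int_{B_1(z)}\log\frac{1}{|z-w|}f(w)\,\ud w,$$
where $\epsilon(z)\to0$ as $|z|\to\infty$. I would integrate this identity over $z\in B_{r_0}(x)$ and divide by $|B_{r_0}(x)|$, so that the average splits as $\mathrm{I}(x)+\mathrm{II}(x)$, with $\mathrm{I}$ the average of the leading logarithmic term and $\mathrm{II}$ the average of the local logarithmic potential. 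The goal is then to show $\mathrm{I}(x)=(-\alpha+o(1))\log|x|$ and $\mathrm{II}(x)=o(1)$, the latter being absorbed into $(-\alpha+o(1))\log|x|$ since $\log|x|\to\infty$.

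For the leading term $\mathrm{I}(x)$, the point is that everything is nearly constant across $B_{r_0}(x)$ once $|x|$ is large. Since $r_0$ is fixed and $|z|\geq|x|-r_0$ for $z\in B_{r_0}(x)$, we have $\sup_{z\in B_{r_0}(x)}|\epsilon(z)|\leq\sup_{|z|\geq|x|-r_0}|\epsilon(z)|\to0$ as $|x|\to\infty$, which is exactly the uniform smallness guaranteed by $\epsilon(z)\to0$ at infinity. Likewise $\log|z|=\log|x|+O(r_0/|x|)$ uniformly on the ball. Splitting $(-\alpha+\epsilon(z))\log|z|=-\alpha\log|z|+\epsilon(z)\log|z|$, the first part averages to $-\alpha\log|x|+o(1)$ and the second is bounded in absolute value by $(\sup_{B_{r_0}(x)}|\epsilon|)(\log|x|+o(1))=o(1)\log|x|$; hence $\mathrm{I}(x)=(-\alpha+o(1))\log|x|$.

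The term $\mathrm{II}(x)$ is where the genuine technical care lies, since the local singularity of the logarithm must be balanced against the integrability of $f$. I would apply Fubini (the absolute integrability being controlled by the same bound below) and use $w\in B_1(z)\iff z\in B_1(w)$ to write
$$\mathrm{II}(x)=\frac{c_n}{|B_{r_0}(x)|}\int_{\mr^n}f(w)\left(\int_{B_{r_0}(x)\cap B_1(w)}\log\frac{1}{|z-w|}\,\ud z\right)\ud w,$$
noting that the inner region is nonempty only for $w\in B_{r_0+1}(x)$, which recedes to infinity as $|x|\to\infty$. The inner integral is dominated uniformly in $w$ by the fixed constant $M:=\int_{B_1(0)}\bigl|\log|v|\bigr|\,\ud v<\infty$, so that
$$|\mathrm{II}(x)|\leq\frac{c_nM}{|B_{r_0}(0)|}\int_{|w|\geq|x|-r_0-1}|f(w)|\,\ud w\longrightarrow0,$$
because $f\in L^1(\mr^n)$ and $|B_{r_0}(x)|=|B_{r_0}(0)|$ is independent of $x$.

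Combining $\mathrm{I}$ and $\mathrm{II}$ gives $\frac{1}{|B_{r_0}(x)|}\int_{B_{r_0}(x)}\mathcal{L}(f)(z)\,\ud z=(-\alpha+o(1))\log|x|+o(1)=(-\alpha+o(1))\log|x|$, which is \eqref{B_1(x)Lf}. I expect the only real subtleties to be the uniform control of $\epsilon(z)$ over $B_{r_0}(x)$ (whose radius is fixed while $|x|\to\infty$) and the Fubini/tail estimate for $\mathrm{II}$; both become routine once the decomposition from Lemma \ref{lem: L(f)} is in hand.
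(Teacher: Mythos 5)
Your proof is correct: averaging the pointwise expansion of Lemma \ref{lem: L(f)} over $B_{r_0}(x)$, using that the $o(1)$ there is uniform for $|z|\geq |x|-r_0$, and killing the local potential term by Fubini together with the uniform bound $\int_{B_1(0)}|\log|v||\,\ud v<\infty$ and the $L^1$ tail of $f$ is exactly the standard route from Lemma \ref{lem: L(f)} to the averaged statement. The present paper does not reprove this lemma but imports it from Lemma 2.4 of \cite{Li 23 Q-curvature}, whose argument proceeds along essentially the same lines, so there is nothing to add.
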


\begin{lemma}\label{lem: B_r_1|x| L(f)}(Lemma 2.5 in \cite{Li 23 Q-curvature})
	For any $0<r_1<1$ fixed, there holds
	\begin{equation}\label{B_r_1|x|(x)Lf}
		\frac{1}{|B_{r_1|x|}(x)|}\int_{B_{r_1|x|}(x)}\mathcal{L}(f)(y)\ud y=(-\alpha+o(1))\log|x|
	\end{equation}
where  $o(1)\to 0$ as $|x|\to\infty$.
\end{lemma}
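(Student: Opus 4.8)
The plan is to feed the pointwise expansion of Lemma~\ref{lem: L(f)} into the ball average and control the two resulting pieces. Set $R=r_1|x|$. Because $0<r_1<1$, every $y\in B_R(x)$ satisfies $(1-r_1)|x|\le|y|\le(1+r_1)|x|$, so $|y|\to\infty$ uniformly on the ball as $|x|\to\infty$; in particular $\log|y|=\log|x|+O(1)$ uniformly in $y\in B_R(x)$, and the error $o(1)$ in Lemma~\ref{lem: L(f)} (a function of $|y|$ tending to $0$) is likewise uniform over the ball. Thus
\begin{equation*}
	\mathcal{L}(f)(y)=(-\alpha+o(1))\log|y|+\frac{2}{(n-1)!|\ms^n|}\int_{B_1(y)}\log\frac{1}{|y-z|}f(z)\,\ud z
\end{equation*}
holds simultaneously for all $y\in B_R(x)$.

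Averaging the first term over $B_R(x)$ and using $\log|y|=\log|x|+O(1)$ gives
\begin{equation*}
	\frac{1}{|B_R(x)|}\int_{B_R(x)}(-\alpha+o(1))\log|y|\,\ud y=(-\alpha+o(1))\bigl(\log|x|+O(1)\bigr)=(-\alpha+o(1))\log|x|,
\end{equation*}
which is exactly the claimed leading order. It remains to show that the average $E(x)$ of the local singular term is $o(\log|x|)$.

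Interchanging the order of integration—legitimate by Tonelli, since $\int_{B_R(x)\cap B_1(z)}\bigl|\log|y-z|\bigr|\,\ud y\le c_0$ for every $z$ and $f\in L^1(\mr^n)$, where $c_0:=\int_{B_1(0)}\bigl|\log|w|\bigr|\,\ud w<\infty$—and using $z\in B_1(y)\Leftrightarrow y\in B_1(z)$, one obtains
\begin{equation*}
	E(x)=\frac{2}{(n-1)!|\ms^n|}\,\frac{1}{|B_R(x)|}\int_{\mr^n}f(z)\left(\int_{B_R(x)\cap B_1(z)}\log\frac{1}{|y-z|}\,\ud y\right)\ud z.
\end{equation*}
The inner integral is nonnegative and bounded above by $\int_{B_1(z)}\log\frac{1}{|y-z|}\,\ud y=c_0$, uniformly in $x$ and $z$, so
\begin{equation*}
	|E(x)|\le\frac{2c_0}{(n-1)!|\ms^n|}\,\frac{\|f\|_{L^1(\mr^n)}}{|B_R(x)|}=O\!\left(\frac{1}{|x|^{n}}\right)=o(1).
\end{equation*}
Combining the two displays yields $\frac{1}{|B_R(x)|}\int_{B_R(x)}\mathcal{L}(f)\,\ud y=(-\alpha+o(1))\log|x|$, as asserted.

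The only genuinely delicate point is the estimate for $E(x)$: one must verify that the factor $|B_R(x)|^{-1}$ defeats the singular local contribution uniformly in $x$. This works precisely because each singularity $y=z$ contributes only the bounded amount $c_0$, whereas the averaging ball has volume of order $|x|^{n}$; the $L^1$ mass of $f$ then enters solely through the finite constant $\|f\|_{L^1}$. The hypothesis $r_1<1$ is what keeps $B_R(x)$ bounded away from the origin and makes the expansion of Lemma~\ref{lem: L(f)} uniform across the whole ball, and everything else is routine bookkeeping.
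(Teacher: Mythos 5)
Your proof is correct. The paper does not reprove this lemma here (it is imported verbatim from Lemma 2.5 of \cite{Li 23 Q-curvature}), but your argument --- averaging the pointwise expansion of Lemma \ref{lem: L(f)} over $B_{r_1|x|}(x)$, using $r_1<1$ to make $\log|y|=\log|x|+O(1)$ and the $o(1)$ uniform on the ball, and then defeating the local singular term by Tonelli against the volume $|B_{r_1|x|}(x)|\sim c\,|x|^n$ --- is the natural and standard route, and the uniformity and integrability points are all handled properly.
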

\begin{lemma}\label{lem: B_|x|^p L(f)}(Lemma 2.6 in \cite{Li 23 Q-curvature})
	For any $r_2>0$ fixed and $|x|\gg1$, there holds
	\begin{equation}\label{B_r_2(x)Lf}
		\frac{1}{|B_{|x|^{-r_2}}(x)|}\int_{B_{|x|^{-r_2}}(x)}\mathcal{L}(f)(y)\ud y=(-\alpha+o(1))\log|x|
	\end{equation}
where  $o(1)\to 0$ as $|x|\to\infty$.
\end{lemma}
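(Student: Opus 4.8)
The plan is to average the pointwise decomposition of Lemma~\ref{lem: L(f)} over the shrinking ball $B_{|x|^{-r_2}}(x)$ and to show that this averaging annihilates the local logarithmic term up to an error of order $o(\log|x|)$. Throughout write $\rho=|x|^{-r_2}$ and $c=(n-1)!|\ms^n|$. By Lemma~\ref{lem: L(f)}, for $|z|\gg1$,
$$\mathcal{L}(f)(z)=(-\alpha+o(1))\log|z|+\frac{2}{c}\int_{B_1(z)}\log\frac{1}{|z-y|}f(y)\ud y,$$
and I would integrate this identity against $\frac{1}{|B_\rho(x)|}\ud z$ over $z\in B_\rho(x)$.

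The leading term is immediate. Since $\rho\to0$, the ball $B_\rho(x)$ lies in $\{|z|\geq|x|/2\}$ for $|x|$ large, so $\log|z|=\log|x|+o(1)$ uniformly on $B_\rho(x)$ and the $o(1)$ factor supplied by Lemma~\ref{lem: L(f)} is uniformly small there; hence
$$\frac{1}{|B_\rho(x)|}\int_{B_\rho(x)}(-\alpha+o(1))\log|z|\ud z=(-\alpha+o(1))\log|x|,$$
which is exactly the asserted main term.

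It then remains to bound the averaged local term $\mathcal{E}(x)$, which after Fubini becomes $\mathcal{E}(x)=\frac{2}{c}\int_{\mr^n}W(y)f(y)\ud y$ with weight
$$W(y):=\frac{1}{|B_\rho(x)|}\int_{B_\rho(x)\cap\{|z-y|<1\}}\log\frac{1}{|z-y|}\ud z,$$
supported in $y\in B_{1+\rho}(x)$. The key is the uniform estimate $|W(y)|\leq r_2\log|x|+C$ on this support. For $|x-y|\leq2\rho$ the rescaling $z=x+\rho w$ turns the average into $-\log\rho$ plus a bounded integral of $\log\frac{1}{|w-a|}$ over the unit ball with $|a|\leq2$; for $2\rho<|x-y|<1+\rho$ one has $\tfrac12|x-y|\leq|z-y|\leq2|x-y|$, whence $|\log|z-y||\leq|\log|x-y||+C\leq-\log(2\rho)+C$. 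In both ranges the dominant scale is $-\log\rho=r_2\log|x|$. Pairing this with the tail smallness $\int_{B_{1+\rho}(x)}|f|\leq\int_{\{|y|\geq|x|/2\}}|f|\to0$ yields $|\mathcal{E}(x)|\leq\frac{2}{c}(r_2\log|x|+C)\cdot o(1)=o(\log|x|)$, which is absorbed into $(-\alpha+o(1))\log|x|$ and gives the claimed identity.

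The main obstacle is precisely that the logarithmic weight $W(y)$ grows like $-\log\rho=r_2\log|x|$ as the ball shrinks, the same order as the leading term, so no crude estimate of the local term can be $o(\log|x|)$ by itself. This is what distinguishes the present shrinking scale from the fixed and proportional scales treated in the preceding two lemmas. The resolution rests on two features working together: the blow-up is confined to $B_{1+\rho}(x)$, a region near infinity where the $L^1$-mass of $f$ is negligible, and the averaging over $B_\rho(x)$ is exactly what caps the otherwise non-integrable pointwise singularity $\log\frac{1}{|x-y|}$ at the finite scale $-\log\rho$.
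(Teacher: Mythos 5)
Your proof is correct, and it follows essentially the same route as the source of this lemma (the paper only quotes it from Lemma 2.6 of \cite{Li 23 Q-curvature}): average the decomposition of Lemma \ref{lem: L(f)}, use Fubini to reduce the near-field contribution to a weight $W(y)$ supported in $B_{1+\rho}(x)$ bounded by $r_2\log|x|+C$, and pair this with the vanishing $L^1$-tail of $f$ to absorb the error into $o(1)\log|x|$. The two case distinctions ($|x-y|\le 2\rho$ versus $2\rho<|x-y|<1+\rho$) and the uniformity of the $o(1)$ over $z\in B_\rho(x)$ are handled correctly, so nothing further is needed.
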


\begin{lemma}\label{lem: e^nLf on B_R+1 B_R-1}(Lemma 2.8 in \cite{Li 23 Q-curvature})
	For $R\gg1$, there holds
	$$\int_{B_{R+1}(0)\backslash B_{R-1}(0)}e^{n\mathcal{L}(f)}\ud y=R^{n-1-n\alpha+o(1)}$$
	where $o(1)\to 0$ as $R\to\infty.$
\end{lemma}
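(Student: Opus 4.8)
The plan is to reduce everything to the asymptotic decomposition of Lemma~\ref{lem: L(f)}. Writing $c_n=\frac{2}{(n-1)!|\ms^n|}$ and setting the purely local term
$$I(x):=c_n\int_{B_1(x)}\log\frac{1}{|x-y|}f(y)\,\ud y,$$
Lemma~\ref{lem: L(f)} says $\mathcal{L}(f)(x)=(-\alpha+o(1))\log|x|+I(x)$, and on the annulus $A_R:=B_{R+1}(0)\backslash B_{R-1}(0)$ this $o(1)$ is uniform because every such $x$ has $|x|\geq R-1\to\infty$ and $\log|x|=\log R+O(1/R)$ there. Exponentiating and factoring out the slowly varying piece gives
$$\int_{A_R}e^{n\mathcal{L}(f)}\,\ud x=R^{-n\alpha+o(1)}\int_{A_R}e^{nI(x)}\,\ud x.$$
Since $|A_R|$ is of order $R^{n-1}$, the whole lemma reduces to the two-sided estimate $\int_{A_R}e^{nI(x)}\,\ud x=R^{n-1+o(1)}$, after which one multiplies the exponents.

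For the lower bound I would use convexity: Jensen's inequality gives $\int_{A_R}e^{nI}\,\ud x\geq|A_R|\exp\!\big(\tfrac{n}{|A_R|}\int_{A_R}I\,\ud x\big)$. By Fubini, $\int_{A_R}I\,\ud x=c_n\int f(y)\big(\int_{A_R\cap B_1(y)}\log\frac{1}{|x-y|}\,\ud x\big)\ud y$, where the inner integral is bounded by $\int_{B_1(0)}\log\frac{1}{|z|}\,\ud z<\infty$ and is supported in $\{R-2\leq|y|\leq R+2\}$. Over that set $\int|f|\to0$ as $R\to\infty$ because $f\in L^1(\mr^n)$, so $\int_{A_R}I\,\ud x=o(1)$, the exponential factor tends to $1$, and we get $\int_{A_R}e^{nI}\,\ud x\geq cR^{n-1}=R^{n-1+o(1)}$.

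The main difficulty is the upper bound, since $I(x)$ is not pointwise bounded: a thin concentration of $f$ near $x$ makes $\log\frac{1}{|x-y|}$ large. The remedy is a Brezis--Merle type exponential-integrability argument built on the smallness of the tail mass. As $\log\frac{1}{|x-y|}\geq 0$ on $B_1(x)$, I may discard $f^-$ and replace $f$ by $f^+$. Put $M:=\int_{\{|y|\geq R-2\}}f^+\,\ud y$, a constant with $M\to0$, and let $\ud\nu:=M^{-1}f^+\mathbf{1}_{\{|y|\geq R-2\}}\,\ud y$ be the associated probability measure; the crucial point is to normalise by the \emph{global} mass $M$ rather than the local mass $\int_{B_1(x)}f^+$, which is what keeps the estimate lossless. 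For $x\in A_R$ one has $B_1(x)\subset\{|y|\geq R-2\}$, hence $nI(x)\leq nc_nM\int\log^+\frac{1}{|x-y|}\,\ud\nu$, and Jensen's inequality yields
$$e^{nI(x)}\leq\int\max\!\big(|x-y|^{-nc_nM},1\big)\,\ud\nu(y)\leq 1+\int|x-y|^{-nc_nM}\,\ud\nu(y).$$
Integrating over $A_R$ and applying Fubini, it remains to bound $\int_{A_R}|x-y|^{-nc_nM}\,\ud x$ uniformly in $y$. For $R$ large $nc_nM\leq 1<n$, so the singular part over $A_R\cap B_1(y)$ is at most $\int_{B_1(0)}|z|^{-nc_nM}\,\ud z\leq C_n$, while on $A_R\cap\{|x-y|\geq1\}$ the integrand is $\leq1$ and contributes at most $|A_R|$. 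Thus $\int_{A_R}|x-y|^{-nc_nM}\,\ud x\leq|A_R|+C_n$ uniformly in $y$, and since $\nu$ is a probability measure $\int_{A_R}e^{nI}\,\ud x\leq 2|A_R|+C_n\leq CR^{n-1}$. Combining the two bounds gives $\int_{A_R}e^{nI}\,\ud x=R^{n-1+o(1)}$, completing the proof.
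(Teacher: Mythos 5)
Your proof is correct and follows essentially the same route as the paper: the decomposition of Lemma \ref{lem: L(f)} into $(-\alpha+o(1))\log|x|$ plus a local logarithmic potential, then Jensen's inequality applied to a probability measure built from $f^{+}$ followed by Fubini — exactly the strategy the paper carries out for the ray analogue in Lemma \ref{lem: line e^u}. Your only variation is normalising by the global tail mass $M$ instead of the local mass $\|f^{+}\|_{L^1(B_{1}(x))}$, a harmless (indeed slightly cleaner) technical choice.
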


\begin{lemma}\label{lem: B_R(0)Lf}(Lemma 2.11 in \cite{Li 23 Q-curvature})
	For $R\gg1$, there holds
	$$\int_{B_R(0)}|\mathcal{L}(f)|\ud x=O((\log R)\cdot R^n).$$
\end{lemma}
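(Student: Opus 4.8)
The plan is to estimate the integral by Tonelli's theorem, exchanging the $x$- and $y$-integrations, and to track carefully where the two factors $\log R$ and $R^n$ arise. Write $c_n=\frac{2}{(n-1)!|\ms^n|}$. From the triangle inequality $|\mathcal{L}(f)(x)|\le c_n\int_{\mr^n}\big|\log\tfrac{|y|}{|x-y|}\big|\,|f(y)|\,\ud y$ and Tonelli's theorem one gets
$$\int_{B_R(0)}|\mathcal{L}(f)(x)|\,\ud x\le c_n\int_{\mr^n}|f(y)|\,I(y,R)\,\ud y,\qquad I(y,R):=\int_{B_R(0)}\Big|\log\tfrac{|y|}{|x-y|}\Big|\,\ud x.$$
The essential subtlety is that $\log|y|$ and $\log|x-y|$ are \emph{not} separately integrable against $f$ over all of $\mr^n$, since $f\in L^1(\mr^n)$ does not force $\log|y|\,f\in L^1(\mr^n)$; the gain is that confining $y$ to the relevant range costs only one factor of $\log R$, which is precisely what the statement permits.

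I would split the outer integral at $|y|=2R$. In the far range $|y|>2R$, for $x\in B_R(0)$ one has $\tfrac12\le\tfrac{|x-y|}{|y|}\le\tfrac32$, so $\big|\log\tfrac{|y|}{|x-y|}\big|\le\log2$ and $I(y,R)\le(\log2)\,|B_R(0)|$; this range therefore contributes at most $(\log2)\,|B_R(0)|\,\|f\|_{L^1}=O(R^n)$. In the near range $|y|\le2R$, I would use the triangle inequality to write $I(y,R)\le|B_R(0)|\,\big|\log|y|\big|+J(y,R)$ with $J(y,R):=\int_{B_R(0)}\big|\log|x-y|\big|\,\ud x$, and then translate $z=x-y$: since $|y|\le2R$ the shifted ball lies in $B_{3R}(0)$, so $J(y,R)\le\int_{B_{3R}(0)}\big|\log|z|\big|\,\ud z=O\big((\log R)R^n\big)$ uniformly in such $y$, by a direct polar-coordinate computation.

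It remains to integrate the two near-range pieces against $|f|$ over $\{|y|\le2R\}$. The $J$-contribution is bounded by $O\big((\log R)R^n\big)\,\|f\|_{L^1}=O\big((\log R)R^n\big)$. For the other piece I would estimate $\int_{|y|\le2R}|f(y)|\,\big|\log|y|\big|\,\ud y$ by splitting at $|y|=1$: on $|y|\le1$ the smoothness of $f$ provides a uniform bound $|f|\le M$, so this part is at most $M\int_{B_1(0)}\big|\log|y|\big|\,\ud y=O(1)$ by the local integrability of the logarithm in $\mr^n$; on $1<|y|\le2R$ one has $\big|\log|y|\big|\le\log(2R)$, giving at most $\log(2R)\,\|f\|_{L^1}=O(\log R)$. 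Multiplying by $|B_R(0)|$ produces $O\big((\log R)R^n\big)$, and adding all contributions yields the claim.

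The one point that requires care is the non-separability flagged above: one must not split $\log\tfrac{|y|}{|x-y|}$ globally but only after restricting $y$ to $B_{2R}(0)$, where the single factor $\log(2R)$ coming from the $\log|y|$ term is affordable; the integrable singularity of $\log|y|$ at the origin is absorbed by the local boundedness of the smooth function $f$, and the diagonal singularity of $\log|x-y|$ by translation invariance together with the local integrability of $\log$. The far-field growth could equally be read off from Lemma \ref{lem: L(f)}, which gives $\mathcal{L}(f)(x)=(-\alpha+o(1))\log|x|+g(x)$ with $g(x)=c_n\int_{B_1(x)}\log\tfrac{1}{|x-y|}f(y)\,\ud y$ of bounded $L^1(\mr^n)$ norm; the Tonelli route above has the advantage of being self-contained and of exhibiting transparently where each of the factors $\log R$ and $R^n$ originates.
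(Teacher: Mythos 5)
Your proof is correct. One structural remark: this paper does not actually prove the lemma --- it is imported verbatim as Lemma 2.11 of \cite{Li 23 Q-curvature} --- and the argument there is essentially the alternative you sketch in your closing paragraph: start from the expansion of Lemma \ref{lem: L(f)}, integrate the $(-\alpha+o(1))\log|x|$ term over $B_R(0)$ to get $O((\log R)\cdot R^n)$, and control the near-diagonal remainder $\int_{B_R(0)}\int_{B_1(x)}\bigl|\log|x-y|\bigr|\,|f(y)|\,\ud y\,\ud x$ by Fubini, since $\int_{B_1(y)}\bigl|\log|x-y|\bigr|\,\ud x$ is a fixed dimensional constant, so that piece is only $O(\|f\|_{L^1})$. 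Your route is a self-contained Tonelli argument that never invokes Lemma \ref{lem: L(f)}: the far-range bound via $\tfrac12\le|x-y|/|y|\le\tfrac32$ for $|y|>2R$, the translation estimate $J(y,R)\le\int_{B_{3R}(0)}\bigl|\log|z|\bigr|\,\ud z=O((\log R)\cdot R^n)$ uniformly for $|y|\le 2R$, and the splitting of $\int_{|y|\le2R}|f(y)|\bigl|\log|y|\bigr|\,\ud y$ at $|y|=1$ (local boundedness of the smooth $f$ absorbing the origin singularity, and $\bigl|\log|y|\bigr|\le\log(2R)$ outside) all check out, and you correctly identify the one genuine pitfall --- that $f\in L^1(\mr^n)$ does not make $\log|y|\,f$ integrable, so the kernel may only be separated after localizing $y$ to $B_{2R}(0)$. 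What your version buys is independence from the earlier lemmas (whose proofs require a similar decomposition anyway), at the cost of redoing the far-field cancellation by hand; what the cited route buys is brevity given machinery already in place. Either way the two factors $\log R$ and $R^n$ arise exactly as you say.
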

	\begin{lemma}\label{lem: polynomial growth is polynomial}(See \cite{Armitage 01} or Lemma 2.12 in \cite{Li 23 Q-curvature})
	Foy any integer $m\geq 1,k\geq 0$, suppose that  $$\Delta^m\varphi(x)=0\quad  \mathrm{and}\quad  
	\int_{B_R(0)}\varphi^+\ud x=o(R^{k+n+1}).$$
	Then $\varphi(x)$ is a polynomial with 
	$\deg\varphi\leq \max\{2m-2,k\}.$
\end{lemma}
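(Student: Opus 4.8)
The plan is to show that every partial derivative of $\varphi$ of order $d+1$ vanishes identically, where $d:=\max\{2m-2,k\}$; this forces $\varphi$ to be a polynomial with $\deg\varphi\leq d$, which is the assertion. Since $\Delta^m$ is a constant-coefficient elliptic operator of order $2m$, any solution of $\Delta^m\varphi=0$ is smooth and, applying the standard interior estimate on the unit ball to the rescaled function $y\mapsto \varphi(x_0+Ry)$, satisfies the Cauchy-type bound
\begin{equation}\label{plan:cauchy}
	|D^\beta\varphi(x_0)|\leq \frac{C(n,m,\beta)}{R^{n+|\beta|}}\int_{B_R(x_0)}|\varphi|\,\ud x
\end{equation}
for every multi-index $\beta$, every $x_0\in\mr^n$ and every $R>0$. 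Everything therefore reduces to estimating $\int_{B_R(x_0)}|\varphi|$. \textbf{The main obstacle} is that the hypothesis only controls the positive part $\varphi^+$; writing $\int_{B_R}|\varphi|=2\int_{B_R}\varphi^+-\int_{B_R}\varphi$, one still needs a two-sided bound on the signed integral $\int_{B_R}\varphi$.

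To supply this, I would invoke the mean value property for polyharmonic functions. As a solution of a constant-coefficient elliptic equation $\varphi$ is real-analytic, so Pizzetti's formula applies and, because $\Delta^j\varphi\equiv 0$ for $j\geq m$, truncates to
\begin{equation}\label{plan:pizzetti}
	\frac{1}{|\partial B_r(x_0)|}\int_{\partial B_r(x_0)}\varphi\,\ud S=\sum_{j=0}^{m-1}c_{j}\,r^{2j}\,\Delta^j\varphi(x_0)
\end{equation}
for dimensional constants $c_j$. Integrating \eqref{plan:pizzetti} in $r$ over $[0,R]$ against the weight $|\partial B_r|=n\omega_n r^{n-1}$, where $\omega_n:=|B_1(0)|$, produces
\begin{equation}\label{plan:signed}
	\int_{B_R(x_0)}\varphi\,\ud x=n\omega_n\sum_{j=0}^{m-1}\frac{c_j\,\Delta^j\varphi(x_0)}{n+2j}\,R^{n+2j},
\end{equation}
so for each fixed $x_0$ the signed integral is a genuine polynomial in $R$ of degree $n+2(m-1)$, in particular $\int_{B_R(x_0)}\varphi=O(R^{n+2m-2})$. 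Combining this two-sided bound with the hypothesis $\int_{B_R}\varphi^+=o(R^{k+n+1})$ yields
\begin{equation}\label{plan:abs}
	\int_{B_R(x_0)}|\varphi|\,\ud x=o(R^{k+n+1})+O(R^{n+2m-2}).
\end{equation}

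Finally I would feed \eqref{plan:abs} into \eqref{plan:cauchy} with $|\beta|=d+1$. Since $d\geq 2m-2$ the contribution $R^{-(n+d+1)}O(R^{n+2m-2})=O\big(R^{(2m-2)-(d+1)}\big)\to 0$, and since $d\geq k$ the contribution $R^{-(n+d+1)}o(R^{k+n+1})=o\big(R^{\,k-d}\big)\to 0$ as $R\to\infty$, the little-$o$ furnishing the decay precisely in the borderline case $d=k$. Hence $D^\beta\varphi(x_0)=0$ for all $x_0\in\mr^n$ and all $|\beta|=d+1$, so $\varphi$ is a polynomial with $\deg\varphi\leq d=\max\{2m-2,k\}$. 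One should note that the argument must be run at an arbitrary center $x_0$ rather than only the origin; this is harmless because $B_R(x_0)\subset B_{R+|x_0|}(0)$ gives $\int_{B_R(x_0)}\varphi^+=o(R^{k+n+1})$ for each fixed $x_0$. The only non-elementary ingredient is the truncated Pizzetti identity \eqref{plan:pizzetti}, which is exactly the device that converts the one-sided control of $\varphi^+$ into the needed two-sided $L^1$ bound.
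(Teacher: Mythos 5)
Your argument is correct, and it is genuinely a proof, whereas the paper itself offers none: Lemma \ref{lem: polynomial growth is polynomial} is simply quoted from Armitage and from Lemma 2.12 of the author's earlier paper. So the comparison is really between your route and the cited one. Armitage's argument goes through the Almansi/spherical-harmonic decomposition of an entire polyharmonic function and estimates the expansion coefficients one degree at a time; your route --- rescaled interior derivative estimates $|D^\beta\varphi(x_0)|\leq C(n,m,\beta)R^{-n-|\beta|}\int_{B_R(x_0)}|\varphi|\,\ud x$ combined with the exact polyharmonic mean-value identity to convert the one-sided hypothesis on $\varphi^+$ into a two-sided $L^1$ bound --- is more compact and makes the origin of the threshold $\max\{2m-2,k\}$ completely transparent: $2m-2$ is the top power of $r$ in the truncated Pizzetti sum, and $k$ comes from the hypothesis. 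The exponent bookkeeping checks out: with $d=\max\{2m-2,k\}$ and $|\beta|=d+1$ the two contributions are $o(R^{k-d})$ and $O(R^{2m-3-d})$, both tending to $0$, with the little-$o$ carrying the borderline case $d=k$. Two points deserve an explicit word if this were written out in full. First, the $L^1$-interior estimate \eqref{plan:cauchy} is standard but not free; it follows from local elliptic regularity for $\Delta^m$ plus a covering/iteration argument to lower the right-hand norm from $L^2$ to $L^1$ (or from the polyharmonic volume mean-value property). Second, rather than invoking real-analyticity and a ``truncation'' of the infinite Pizzetti series (whose convergence for general analytic functions is only local in $r$), you should cite the finite Pizzetti identity for functions with $\Delta^m\varphi=0$, which is an exact formula valid for all $r>0$ and is proved by induction on $m$ via Green's identity; that is what justifies integrating \eqref{plan:pizzetti} up to arbitrarily large $R$. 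With those two standard facts in place, the proof is complete.
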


The following lemma has been  established by the estimate (3.6) in \cite{Li 23 Q-curvature}. For the reader's  convenience, we repeat its proof and establish it as  the following lemma.

\begin{lemma}\label{lem: line e^u}
	Consider a curve $\gamma(t)=tx_0$ where $x_0\in \mr^n$ satisfying $|x_0|=1$ and $t\geq 0$.	For  $i\gg1$,  there holds
	$$\int^{i+1}_{i}e^{\mathcal{L}(f)(\gamma(t))}\ud t \leq i^{-\alpha+o(1)}$$
	where $o(1)\to 0$ as $i\to\infty.$
\end{lemma}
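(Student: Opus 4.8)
The plan is to first isolate the dominant logarithmic growth of $\mathcal{L}(f)$ along the ray using Lemma \ref{lem: L(f)}, and then to absorb the remaining local contribution into the error exponent $o(1)$. Applying Lemma \ref{lem: L(f)} at $x=\gamma(t)=tx_0$ (so that $|\gamma(t)|=t$), I would write, for $t\geq i\gg 1$,
\begin{equation*}
	\mathcal{L}(f)(\gamma(t))=(-\alpha+o(1))\log t+w(t),\qquad w(t):=\frac{2}{(n-1)!|\mathbb{S}^n|}\int_{B_1(\gamma(t))}\log\frac{1}{|\gamma(t)-y|}f(y)\,\ud y.
\end{equation*}
Since the error $o(1)$ tends to $0$ uniformly for $t\geq i$ and $\log t=\log i+o(1)$ on $[i,i+1]$, the first term yields $e^{(-\alpha+o(1))\log t}\leq i^{-\alpha+o(1)}$ for every $t\in[i,i+1]$. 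Hence it remains to prove a uniform bound $\int_i^{i+1}e^{w(t)}\,\ud t\leq C$ for $i$ large, which suffices because $C=i^{o(1)}$.

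For the local term I would first reduce to a nonnegative density: the kernel $\log\frac{1}{|\gamma(t)-y|}$ is positive precisely on $B_1(\gamma(t))$, so replacing $f$ by $f^+$ only increases $w(t)$, and I may assume $f\geq 0$ and write $w(t)=\frac{2}{(n-1)!|\mathbb{S}^n|}\int_{\mr^n}(\log\frac{1}{|\gamma(t)-y|})^+f(y)\,\ud y$. The crucial observation is that the relevant mass is not $\|f\|_{L^1}$ but the tail mass carried by the tubular region $E_i:=\bigcup_{t\in[i,i+1]}B_1(\gamma(t))$: since $f\in L^1(\mr^n)$ and $E_i$ escapes to infinity, $m_i:=\int_{E_i}f\,\ud y\to 0$. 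Assuming $m_i>0$ (otherwise $w\equiv 0$ on the interval and the bound is trivial), I introduce the probability measure $\ud\nu_i:=m_i^{-1}f\mathbf{1}_{E_i}\,\ud y$ and apply Jensen's inequality to the convex exponential, obtaining
\begin{equation*}
	e^{w(t)}\leq\int_{E_i}\exp\Big(\tfrac{2m_i}{(n-1)!|\mathbb{S}^n|}\big(\log\tfrac{1}{|\gamma(t)-y|}\big)^+\Big)\,\ud\nu_i(y)=\int_{E_i}\max\big(|\gamma(t)-y|^{-s_i},1\big)\,\ud\nu_i(y),
\end{equation*}
with $s_i:=\frac{2m_i}{(n-1)!|\mathbb{S}^n|}$.

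Then I would integrate in $t$, interchange the order of integration by Fubini, and estimate the resulting one-dimensional integral for each fixed $y$. Writing $a=x_0\cdot y$ one has $|\gamma(t)-y|\geq|t-a|$, whence $\int_i^{i+1}\max(|\gamma(t)-y|^{-s_i},1)\,\ud t\leq 1+\int_i^{i+1}|t-a|^{-s_i}\,\ud t\leq 1+\frac{2}{1-s_i}$, uniformly in $y$. This is the one point that genuinely requires the tail estimate: the singular integral $\int|t-a|^{-s_i}\,\ud t$ over a unit interval converges exactly because $s_i<1$, which holds for $i$ large since $m_i\to 0$; were we forced to use the full mass $\|f\|_{L^1}$, the effective exponent could exceed $1$ and the bound would fail. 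Feeding this back through Fubini and using $\nu_i(E_i)=1$ gives $\int_i^{i+1}e^{w(t)}\,\ud t\leq 1+\frac{2}{1-s_i}\leq C$ for $i$ large, and combining with the first step completes the proof. The main obstacle is precisely the control of the logarithmic singularity of the kernel along the ray, and its resolution is the smallness of the tail mass $m_i$, which forces the effective exponent below $1$.
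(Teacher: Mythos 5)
Your argument is correct and follows essentially the same route as the paper: isolate the $(-\alpha+o(1))\log|x|$ term via Lemma \ref{lem: L(f)}, reduce to $f^+$, use the smallness of the tail $L^1$-mass together with Jensen's inequality to turn the logarithmic kernel into a power $|\gamma(t)-y|^{-s_i}$ with exponent below $1$, and finish with Fubini and the one-dimensional integrability of that singularity along the ray. The only differences are cosmetic (you work with the tube $E_i$ over the whole interval $[i,i+1]$ and let $s_i\to 0$, while the paper covers $[i,i+1]$ by two half-intervals with balls $B_{1/2}(x)$ and fixes the exponent at $1/2$).
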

\begin{proof}
	Firstly, notice that
	$$|\int_{B_1(z)\backslash B_{1/4}(z)}\log\frac{1}{|z-y|}f(y)\ud y|\leq \log 4\int_{B_1(z)\backslash B_{1/4}(z)}|f(y)|\ud y\leq C.$$
	Making use of  Lemma \ref{lem: L(f)}, for $|z|\gg1$, 
	there holds
	$$\mathcal{L}(f)(z)= (-\alpha+o(1))\log|z|+\frac{2}{(n-1)!|\mathbb{S}^n|}\int_{B_{1/4}(z)}\log\frac{1}{|z-y|}f(y)\ud y.$$
	Then 	we can easily obtain that 
	$$\mathcal{L}(f)(z)\leq (-\alpha+o(1))\log|z|+\frac{2}{(n-1)!|\mathbb{S}^n|}\int_{B_{1/4}(z)}\log\frac{1}{|z-y|}f^+(y)\ud y.$$
	Moreover, for $z\in B_{1/4}(x)$ and $|x|\gg1$, there holds
	\begin{equation}\label{equ: u leq f^+}
		\mathcal{L}(f)(z)\leq (-\alpha+o(1))\log|x|+\frac{2}{(n-1)!|\mathbb{S}^n|}\int_{B_{1/2}(x)}\log\frac{1}{|z-y|}f^+(y)\ud y.
	\end{equation}
	If $f^+=0$ a.e. on $B_{1/2}(x)$, for   any $z\in B_{1/4}(x)$ ,
	there holds 
	$$\mathcal{L}(f)(z)\leq (-\alpha+o(1))\log|x|.$$
	Then result  follows in this case.
	Otherwise, since $f\in L^1(\mr^n)$, there exists $R_1>0$ such that for any $|x|>R_1$, 
	$$\int_{B_{1/2}(x)}f^+(y)\ud y\leq \frac{(n-1)!|\mathbb{S}^n|}{4}.$$ 
	The estimate \eqref{equ: u leq f^+} and Jensen's inequality yield that for $|x|>R_1$ and $z\in B_{1/4}(x)$
	\begin{align*}
		e^{\mathcal{L}(f)(z)}\leq &|x|^{-\alpha+o(1)}\exp\left(\frac{2}{(n-1)!|\mathbb{S}^n|}\int_{B_{1/2}(x)}\log\frac{1}{|z-y|}f^+(y)\ud y\right)\\
		\leq &|x|^{-\alpha+o(1)}\int_{B_{1/2}(x)}(\frac{1}{|z-y|})^{\frac{2\|f^+\|_{L^1(B_{1/2}(x))}}{(n-1)!|\mathbb{S}^n|}}\frac{f^+(y)}{\|f^+\|_{L^1(B_{1/2}(x))}}\ud y\\
		\leq &|x|^{-\alpha+o(1)}\int_{B_{1/2}(x)}\frac{1}{\sqrt{|z-y|}}\frac{f^+(y)}{\|f^+\|_{L^1(B_{1/2}(x))}}\ud y
	\end{align*}
	where we have used the  fact $|z-y|\leq 1$ for $z\in B_{1/4}(x)$ and $y\in B_{1/2}(x)$.
	Then by using Fubini's theorem and the above estimate, we have
	\begin{align*}
		\int^{|x|+\frac{1}{4}}_{|x|-\frac{1}{4}}e^{\mathcal{L}(f)(\gamma(t))}\ud t\leq &|x|^{-\alpha+o(1)}\int^{|x|+\frac{1}{4}}_{|x|-\frac{1}{4}}\int_{B_1(x)}\frac{1}{\sqrt{|\gamma(t)-y|}}\frac{f^+(y)}{\|f^+\|_{L^1(B_{1/2}(x))}}\ud y\ud t\\
		\leq& C|x|^{-\alpha+o(1)}\\
		=&|x|^{-\alpha+o(1)}.
	\end{align*}
	Thus, for $i\gg1$, one has
	$$\int_i^{i+1}e^{\mathcal{L}(f)(\gamma(t))}\ud t=\int_i^{i+\frac{1}{2}}e^{\mathcal{L}(f)(\gamma(t))}\ud t+\int_{i+\frac{1}{2}}^{i+1}e^{\mathcal{L}(f)(\gamma(t))}\ud t\leq i^{-\alpha+o(1)}.$$
	Hence we finish our proof.
\end{proof}	
With help of above lemma, it is easy to obtain the following corollary which has been established in \cite{CQY}.
\begin{corollary}\label{cor:normal metric}
	If the complete metric $g=e^{2u}|dx|^2$ with finite total Q-curvature is normal, then there holds
	$$\int_{\mr^n}Qe^{nu}\ud x\leq \frac{(n-1)!||\mathbb{S}^n|}{2}.$$
\end{corollary}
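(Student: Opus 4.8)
The plan is to convert the completeness of $g$ into a divergence statement for the $g$-length of a Euclidean ray, and then contradict it via Lemma \ref{lem: line e^u} whenever $\alpha>1$. Since $g$ is normal, the definition \eqref{integral equation} lets me write $u(x)=\mathcal{L}(f)(x)+C$ with $f=Qe^{nu}\in L^1(\mr^n)$, so that in the notation $\alpha$ fixed above one has $\alpha=\frac{2}{(n-1)!|\mathbb{S}^n|}\int_{\mr^n}Qe^{nu}\ud x$. The desired inequality $\int_{\mr^n}Qe^{nu}\ud x\leq\frac{(n-1)!|\mathbb{S}^n|}{2}$ is exactly the assertion $\alpha\leq 1$, so it suffices to rule out $\alpha>1$.

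First I would fix a unit vector $x_0\in\mr^n$ with $|x_0|=1$ and consider the radial ray $\gamma(t)=tx_0$, $t\geq 0$. Its $g$-length is $\int_0^\infty e^{u(\gamma(t))}\ud t$, and because $g$ is complete this ray, which leaves every compact set, must have infinite $g$-length; hence $\int_0^\infty e^{u(\gamma(t))}\ud t=+\infty$.

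Next I would estimate the tail of this integral using Lemma \ref{lem: line e^u}. Since $u=\mathcal{L}(f)+C$, for every $i\gg 1$ we have
$$\int_i^{i+1}e^{u(\gamma(t))}\ud t=e^C\int_i^{i+1}e^{\mathcal{L}(f)(\gamma(t))}\ud t\leq e^C\,i^{-\alpha+o(1)},$$
with $o(1)\to 0$ as $i\to\infty$. Now suppose, for contradiction, that $\alpha>1$. Choosing $\varepsilon=\tfrac{\alpha-1}{2}>0$, there is an integer $N$ such that $-\alpha+o(1)\leq -(1+\varepsilon)$ for all $i\geq N$, whence $\sum_{i\geq N}\int_i^{i+1}e^{u(\gamma(t))}\ud t\leq e^C\sum_{i\geq N}i^{-(1+\varepsilon)}<+\infty$. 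Together with the obvious finiteness of $\int_0^N e^{u(\gamma(t))}\ud t$, this forces $\int_0^\infty e^{u(\gamma(t))}\ud t<+\infty$, contradicting completeness. Therefore $\alpha\leq 1$, which is the claim.

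The argument is short precisely because Lemma \ref{lem: line e^u} already packages the decisive pointwise-on-a-ray estimate. The only points requiring care are the translation of completeness into divergence of the radial length integral, and the observation that the additive constant $C$ is harmless, contributing only the factor $e^C$ which cannot affect convergence. The essential mechanism is that the borderline exponent for convergence of $\sum i^{-s}$ is $s=1$, matching exactly the threshold $\alpha=1$ in the statement; this is what makes the Cohn--Vossen-type bound fall out of the length estimate.
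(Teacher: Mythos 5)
Your proposal is correct and follows essentially the same route as the paper: write $u=\mathcal{L}(Qe^{nu})+C$, apply Lemma \ref{lem: line e^u} along a radial ray, and derive a contradiction with completeness from the convergence of $\sum_i i^{-\alpha+o(1)}$ when $\alpha>1$. The only cosmetic difference is that the paper phrases the contradiction as boundedness of $d_g(\gamma(s_1),\gamma(k))$ rather than finiteness of the ray's total $g$-length; these are equivalent.
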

\begin{proof}
	Since the metric $g$ is normal, one has
$$u(x)=\mathcal{L}(Qe^{nu})+C.$$
Set the notation
\begin{equation}\label{alpha_0 def}
	\alpha_0:=\frac{2}{(n-1)!|\mathbb{S}^n|}\int_{\mr^n}Qe^{nu}\ud x.
\end{equation}
	We argue by contradiction and suppose that $\alpha_0>1$. 
	Consider the curve $\gamma(t)=tx_0$ where $x_0\in \mr^n$ satisfying $|x_0|=1$ and $t\geq 0$. Choosing the positive constant  $\epsilon=\frac{\alpha_0-1}{2}>0$, Lemma \ref{lem: line e^u} shows that there exists  an integer $s_1>0$ such that for all $s\geq s_1$, there holds
	\begin{equation}\label{t>t_1}
		\int^{s+1}_se^{u(\gamma(t))}\ud t\leq s^{-\alpha_0+\epsilon}.
	\end{equation}
	Then for the integer $k>s_1$, there holds
	\begin{align*}
		d_g(\gamma(s_1),\gamma(k))\leq &\int^{k}_{s_1}e^{u(\gamma(t))}\ud t\\
		\leq &\sum^k_{i=s_1}i^{-\alpha_0+\epsilon}\\
		=&\sum^k_{i=s_1}i^{-1-\epsilon}
	\end{align*}
which yields that
$$	d_g(\gamma(s_1),\gamma(k))<+\infty, \quad \mathrm{as}\; k\to\infty.$$
This contradicts to the assumption that the metric $g$ is complete. Thus we finish our proof.
\end{proof}
{\bf Proof of Theorem \ref{thm: tau(g)}:}

On one hand, when the metric $g$ is normal, Lemma \ref{lem: e^nLf on B_R+1 B_R-1} yields that
$$\int_{B_R(0)}e^{nu}\ud x=C\int_{B_R(0)}e^{n\mathcal{L}(Qe^{nu})}\ud x=O(R^{a_1})$$
for some constant $a_1\geq 0$ which deduces that $\tau(g)$ is finite.

On the other hand, if  the volume entropy $\tau(g)$ is finite, we follow the same argument established in the proof of Theorem 1.1 of \cite{Li 23 Q-curvature} to show that the metric $g$ is normal.

Set the function $P(x)$ defined by 
\begin{equation}\label{P def}
	P(x):=u(x)-\mathcal{L}(Qe^{nu})(x).
\end{equation}
With help of the equations \eqref{equ:conformal eqution} and \eqref{delta Lf}, one has 
\begin{equation}\label{Delta n/2P=0}
	(-\Delta)^{\frac{n}{2}}P=0.
\end{equation}
In particular, when $n$ is odd, making use of the property \eqref{Delta 1/2 circ 1/2=1} and the equation \eqref{Delta n/2P=0}, there holds
\begin{equation}\label{n+1/2}
	(-\Delta)^{\frac{n+1}{2}}P=(-\Delta)^{\frac{1}{2}}\circ(-\Delta)^{\frac{n}{2}}P=0.
\end{equation}
Thus, we can  rewrite \eqref{Delta n/2P=0} and  \eqref{n+1/2}  for all integer $n\geq 2$ as follows
\begin{equation}\label{[n+1]/2}
	(-\Delta)^{[\frac{n+1}{2}]}P=0
\end{equation}
where $[s]$ denotes the largest integer less than or equal to 
$s$.

Based on the assumption that  $\tau(g)$ is finite, then there exists a constant  $a_2\geq0$ such that,  for all $R\geq 1$,  there holds
\begin{equation}\label{e^{nu} growth}
	\int_{B_R(0)}e^{nu}\ud x\leq CR^{a_2}.
\end{equation}
Choose a small positive constant $q\in (0,1)$ such that $(\frac{a_2}{n}-1)q\leq \frac{1}{2}$.
With help of the above estimate \eqref{e^{nu} growth} and H\"older's inequality, together with Lemma \ref{lem: B_R(0)Lf},   one has 
\begin{align*}
	\int_{B_R(0)}P^+\ud x\leq &	\int_{B_R(0)}u^+\ud x+\int_{B_R(0)}|\mathcal{L}(Qe^{nu})|\ud x\\
	\leq & \frac{1}{q}\int_{B_R(0)}e^{q}\ud +O(R^n\log R)\\
	\leq &\frac{1}{q}\left(\int_{B_R(0)}e^{nu}\ud x\right)^{\frac{q}{n}}|B_R(0)|^{1-\frac{q}{n}}+O(R^n\log R)\\
	\leq &CR^{n+\frac{1}{2}}.
\end{align*}
Using  above  estimate and the equation \eqref{[n+1]/2}, Lemma \ref{lem: polynomial growth is polynomial} yields that  
$P(x)$ is a polynomial with degree satisfying
\begin{equation}\label{deg P leq n-1}
	\deg(P)\leq n-1.
\end{equation}

Now, we are going to show that the polynomial $P(x)$ must be a constant.

Firstly, we claim that 
\begin{equation}\label{P upper bound log|x|}
	P(x)\leq C\log (|x|+2).
\end{equation}
Making use of  \eqref{deg P leq n-1}, one has $$|\nabla P(x)|\leq C(1+|x|)^{n-2}.$$ Thus, for $x\in\mr^n$ with  $|x|=R\gg1$ and  any $y\in B_{|x|^{2-n}}(x)$, there holds
\begin{equation}\label{P(y)geq P(x)}
	P(y)\geq P(x)-C.
\end{equation}
With help of this  estimate \eqref{P(y)geq P(x)}, Lemma \ref{lem: B_|x|^p L(f)} and Jensen's inequality, for $|x|=R\gg1$,  one has
\begin{align*}
	\int_{B_{2R}(0)}e^{nu}\ud y\geq &\int_{B_{R^{2-n}}(x)}e^{nP(y)+n\mathcal{L}(Qe^{nu})}\ud y\\
	\geq &Ce^{nP(x)}\int_{B_{R^{2-n}}(x)}e^{n\mathcal{L}(Qe^{nu})}\ud y\\
	\geq &Ce^{nP(x)}|B_{R^{2-n}}(x)|\exp\left(\frac{1}{|B_{R^{2-n}}(x)|}\int_{B_{R^{2-n}}(x)}e^{n\mathcal{L}(Qe^{nu})}\ud y\right)\\
	\geq &Ce^{nP(x)}R^{-C}.
\end{align*}
Combing with the volume control  \eqref{e^{nu} growth}, then we obtain that 
$$P(x)\leq C\log(|x|+2).$$
Thus we prove the claim \eqref{P upper bound log|x|}.

Secondly, we show that $P(x)$ is a constant and argue by contradiction.

 If $P(x)$ is not a constant,  then the polynomial $P(x)$ must have the following decomposition
$$P(x)=H_m(x)+P_{m-1}(x)$$
where $m=\deg (P)\geq 1$,  $H_m(x)$ is a non-constant homogeneous polynomial of degree $m$ and $P_{m-1}$ is a polynomial with degree not greater than $m-1$.
Due to the estimate \eqref{P upper bound log|x|}, we must have 
\begin{equation}\label{H_m leq 0}
	H_m(x)\leq 0.
\end{equation}
Otherwise there exists $x_1\in\mr^n$ with $|x_1|=1$  such that $H_m(x_1)>0$ and for any $t>0$
$$H_m(tx_1)=t^mH_m(x_1)>0.$$
For $t\gg1$, there holds
$$P(tx_1)\geq t^mH_m(x_1)-Ct^{m-1}\geq Ct^m$$
which contradicts to \eqref{P upper bound log|x|}. With help of the control  \eqref{H_m leq 0} and the assumption  that $H_m(x)$ is not a constant, there exists $x_2\in \mr^n$ with $|x_2|=1$ such that $H_m(x_2)<0.$ Consider the ray $\gamma(t)=tx_2$ with $t>0$.  
There exists $t_1>0$ such that for $t\geq t_1$ there holds
\begin{equation}\label{P upper bound}
	P(tx_2)=H_m(tx_2)+P_{m-1}(tx_2)\leq -c_0t^m
\end{equation}
where the constant $c_0=-\frac{H_m(x_2)}{2}>0$. With help of Lemma \ref{lem: line e^u}, for any  $\epsilon_0>0$, there exists $t_2>0$ such that for  $t\geq t_2$ there holds
\begin{equation}\label{e^v leq t6-alpha-0}
	\int_{t}^{t+1}e^{\mathcal{L}(Qe^{nu})(\gamma(t))}\ud t\leq t^{-\alpha_0+\epsilon_0}
\end{equation}
where $\alpha_0$ is given by \eqref{alpha_0 def}.
Choosing an integer $T_1\geq t_1+t_2+1$ and another integer $T>T_1$, one has
\begin{align*}
	d_g(\gamma(T_1),\gamma(T))\leq &\int_{T_1}^Te^{u(\gamma(t))}\ud t\\
	\leq & \sum^{T}_{i=T_1}\int^{i+1}_{i}e^{u(\gamma(t))}\ud t\\
	=&\sum^{T}_{i=T_1}\int^{i+1}_{i}e^{P(\gamma(t))+\mathcal{L}(Qe^{nu})(\gamma(t))}\ud t\\
	\leq &\sum^{T}_{i=T_1}e^{-c_0i^m}i^{-\alpha_0+\epsilon_0}.
\end{align*}
By letting $T\to\infty$, one has
$d_g(\gamma(T_1),\gamma(T))<+\infty$
which contradicts to the assumption that the metric $g$ is complete.
Hence $P(x)$ must be a constant and then  the metric $g$ is normal.

Finally, we compute  the precise value of the finite $\tau(g)$.
On one hand, with help of Lemma \ref{lem: B_r_1|x| L(f)} and Jensen's inequality,  for any  $x_R\in \mr^n $ with $|x_R|=R\gg1$, one has
\begin{align*}
	\int_{B_{2R}(0)}e^{nu}\ud x\geq &C\int_{B_{R/2}(x_R)}e^{n\mathcal{L}(Qe^{nu})}\ud x\\
	\geq &|B_{R/2}(x_R)|\exp\left(\frac{1}{|B_{R/2}(x_R)|}\int_{B_{R/2}(x_R)}n\mathcal{L}(Qe^{nu})\ud x\right)\\
	\geq &CR^n\cdot R^{-n\alpha_0+o(1)}
\end{align*}
which yields that
\begin{equation}\label{tau(g) lower bound}
	\tau(g)\geq 1-\alpha_0.
\end{equation}
One the other hand, Lemma \ref{lem: e^nLf on B_R+1 B_R-1} yields that for any $\epsilon>0$ there exists $R_1>1$ such that for any $R>R_1$ one has
$$\int_{B_{R+1}(0)\backslash B_{R-1}(0)}e^{n\mathcal{L}(Qe^{nu})}\ud x\leq R^{n-1-n\alpha_0+\epsilon}.$$
Then for $R\gg1$, one has
\begin{align*}
	\int_{B_R(0)}e^{nu}\ud x\leq&C+C\sum^{[R]}_{i=[R_1]+1}\int_{B_{i+1}(0)\backslash B_{i-1}(0)}e^{n\mathcal{L}(Qe^{nu})}\ud x\\
			\leq &C+C\sum^{[R]}_{i=[R_1]+1}i^{n-1-n\alpha_0+\epsilon}\\
			\leq &C+C\int^{[R]+1}_{[R_1]}t^{n-1-n\alpha_0+\epsilon}\ud t.
\end{align*}
Corollary \ref{cor:normal metric} ensures that  $\alpha_0\leq 1.$ Thus we have
$$	\int_{B_R(0)}e^{nu}\ud x\leq C+\frac{C}{n-n\alpha_0+\epsilon}([R]+1)^{n-n\alpha_0+\epsilon}$$
which deduces that
$$\tau(g)\leq 1-\alpha_0+\frac{\epsilon}{n}.$$
Due to the arbitrary choice of $\epsilon$ and  the lower bound estimate \eqref{tau(g) lower bound}, one has
$$\tau(g)=1-\frac{2}{(n-1)!|\mathbb{S}^n|}\int_{\mr^n}Qe^{nu}\ud x.$$

Thus we finish our proof.

\section{Non-normal complete metrics}\label{section:non-normal metrics}

In this section, we are devoted to deal with non-normal metrics. Firstly, we show that if the volume entropy $h(g)$ is finite, then $u(x)$ has a nice decomposition.

\begin{prop}\label{lem:decomposition}
Consider the smooth  solution $u(x)$ to the equation  \eqref{equ:conformal eqution}.	If $h(g)$ is finite,  one has the following decomposition
	$$u(x)=\mathcal{L}(Qe^{nu})+P(x)$$
	where $P(x)$ is a polynomial with $\deg P\leq n-1$. 
\end{prop}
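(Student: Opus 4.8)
The plan is to follow the skeleton of the proof of Theorem \ref{thm: tau(g)}, the only genuinely new ingredient being how to extract polynomial growth from the much weaker hypothesis that $h(g)$ is finite (rather than $\tau(g)$). First I would set $P(x):=u(x)-\mathcal{L}(Qe^{nu})(x)$ and observe, exactly as in \eqref{Delta n/2P=0}--\eqref{[n+1]/2}, that $(-\Delta)^{[\frac{n+1}{2}]}P=0$, so $P$ is polyharmonic. The goal is then to produce a \emph{polynomial} bound $\int_{B_R(0)}P^+\ud x=O(R^{N})$ for some fixed $N$, after which Lemma \ref{lem: polynomial growth is polynomial} forces $P$ to be a polynomial.

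The difficulty is that $h(g)$ finite only yields an exponential volume bound: for every $s>h(g)$ one has $V_g(B_R(0))=\int_{B_R(0)}e^{nu}\ud x\leq e^{CR^{s}}$ for $R\gg1$, which is far too weak for the H\"older argument used under the assumption $\tau(g)<\infty$. The key idea is to apply Jensen's inequality to the \emph{concave} function $\log$ rather than to the convex exponential. Using the elementary bound $nu^+\leq\log(1+e^{nu})$ together with the concavity of $\log$, one obtains
$$\int_{B_R(0)}u^+\ud x\leq\frac{1}{n}\int_{B_R(0)}\log(1+e^{nu})\ud x\leq\frac{|B_R(0)|}{n}\log\left(1+\frac{V_g(B_R(0))}{|B_R(0)|}\right)\leq CR^{n+s}.$$
Thus the logarithm turns the exponential volume growth into polynomial growth of $\int_{B_R(0)}u^+\ud x$; combining this with Lemma \ref{lem: B_R(0)Lf} gives $\int_{B_R(0)}P^+\ud x\leq CR^{n+s}$, and Lemma \ref{lem: polynomial growth is polynomial} then shows that $P$ is a polynomial.

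It remains to bound $\deg P$ by $n-1$, and here I would invoke the completeness of $g$ (the standing assumption of the paper) together with the polyharmonic constraint, mirroring the ray-length arguments in Corollary \ref{cor:normal metric} and in the proof of Theorem \ref{thm: tau(g)}. Write $P=H_d+(\text{lower order})$ with $H_d$ the nonzero homogeneous part of degree $d=\deg P$. If $H_d(\omega)<0$ for some unit vector $\omega$, then $P(t\omega)\leq -ct^{d}$ along the ray $\gamma(t)=t\omega$, and Lemma \ref{lem: line e^u} yields $\int_{T}^{\infty}e^{u(\gamma(t))}\ud t\leq\sum_i e^{-ci^{d}}i^{-\alpha_0+\epsilon}<\infty$, so the ray has finite $g$-length, contradicting completeness; hence $H_d\geq0$ everywhere. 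On the other hand, since $\Delta^{[\frac{n+1}{2}]}H_d=0$, the Almansi-type decomposition gives $H_d=\sum_{j=0}^{m-1}|x|^{2j}h_{d-2j}$ with $m=[\frac{n+1}{2}]$ and each $h_{d-2j}$ a harmonic homogeneous polynomial. If $d>2m-2$, every factor $h_{d-2j}$ has positive degree, so its spherical average vanishes and $\int_{\ms^{n-1}}H_d\,\ud\sigma=0$; combined with $H_d\geq0$ this forces $H_d\equiv0$, a contradiction. Therefore $d\leq 2m-2\leq n-1$, giving the stated decomposition. I expect the delicate point to be the Jensen step converting the exponential volume bound into the polynomial estimate on $\int_{B_R(0)}P^+\ud x$, since everything downstream reuses machinery already established in the paper.
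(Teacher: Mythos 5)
Your proposal is correct, and for most of its length it coincides with the paper's own proof: the same definition $P=u-\mathcal{L}(Qe^{nu})$, the same observation $(-\Delta)^{[\frac{n+1}{2}]}P=0$, the same Jensen-with-$\log$ trick to convert the exponential volume bound $V_g(B_R(0))\leq e^{CR^{s}}$ into the polynomial estimate $\int_{B_R(0)}u^+\ud x\leq CR^{n+s}$ (the paper phrases it as $\frac{1}{|B_R(0)|}\int nu^+\leq\log(\frac{1}{|B_R(0)|}\int e^{nu^+})$, which is the same inequality), the same appeal to Lemma \ref{lem: B_R(0)Lf} and Lemma \ref{lem: polynomial growth is polynomial} to conclude $P$ is a polynomial, and the same ray-length argument via Lemma \ref{lem: line e^u} and completeness to force $H_d\geq 0$. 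Where you genuinely diverge is the final step bounding $\deg P$. The paper first deduces $\Delta^{[\frac{n+1}{2}]}H_m=0$ by comparing homogeneous degrees in $\Delta^{[\frac{n+1}{2}]}H_m=-\Delta^{[\frac{n+1}{2}]}P_{m-1}$, and then applies the Armitage--Liouville lemma a second time to $\varphi=-H_m$ (whose positive part vanishes because $H_m\geq0$) to get $\deg H_m\leq n-1$. You instead use the Almansi decomposition $H_d=\sum_{j=0}^{m-1}|x|^{2j}h_{d-2j}$ and the zero spherical mean of positive-degree harmonic homogeneous polynomials to show that $d>2[\frac{n+1}{2}]-2$ would force $\int_{\ms^{n-1}}H_d\ud\sigma=0$, hence $H_d\equiv0$ by nonnegativity --- a contradiction. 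Both routes rest on the same intermediate fact $\Delta^{[\frac{n+1}{2}]}H_d=0$, which you assert without proof; it does follow instantly by separating homogeneous components of $\Delta^{[\frac{n+1}{2}]}P=0$, but you should say so, since the paper devotes a paragraph to exactly this point. Your Almansi argument is more self-contained (it avoids a second invocation of the Liouville lemma and even gives the slightly sharper bound $d\leq n-2$ when $n$ is even), while the paper's version reuses machinery already in place; either is acceptable.
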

\begin{proof}
	Since $h(g)$ is finite, by choosing  $\epsilon>0$, one has 
	$$\log \int_{B_R(0)}e^{nu}\ud x\leq CR^{h(g)+\epsilon}.$$
For $R\gg1$,	with help of Jensen's inequality, there holds
	\begin{align*}
		\frac{1}{|B_R(0)|}\int_{B_R(0)} nu^+\ud x\leq &\log\left(\frac{1}{|B_R(0)|}\int_{B_R(0)} e^{nu^+}\ud x\right)\\
		\leq &\log\left(\frac{1}{|B_R(0)|}\int_{B_R(0)} (e^{nu}+1)\ud x\right)\\
		\leq &\log \left(CR^{-n}e^{CR^{h(g)+\epsilon}}+1\right)\\
		\leq &\log\left((1+C)e^{CR^{h(g)+\epsilon}}\right)\\
		\leq &CR^{h(g)+\epsilon}+\log(1+C).
	\end{align*}
Thus one has
\begin{equation}\label{u^+}
		\frac{1}{|B_R(0)|}\int_{B_R(0)}u^+\ud x\leq o(R^{h(g)+2\epsilon})
\end{equation}
Set the same notation $P(x)$ as in the proof of Theorem \ref{thm: tau(g)}
$$P(x):=u(x)-\mathcal{L}(Qe^{nu})$$
and then one has 
\begin{equation}\label{n+1/2P=01}
	(-\Delta)^{[\frac{n+1}{2}]}P=0.
\end{equation}
Making use of Lemma \ref{lem: B_R(0)Lf} and the estimate \eqref{u^+}, one has
\begin{align*}
	\frac{1}{|B_R(0)|}\int_{B_R(0)}P^+\ud x\leq &\frac{1}{|B_R(0)|}\int_{B_R(0)}u^+\ud x +\frac{1}{|B_R(0)|}\int_{B_R(0)}|\mathcal{L}(Qe^{nu})|\ud x\\
	\leq &o(R^{h(g)+2\epsilon})+O(\log R)\\
	=&o(R^{h(g)+2\epsilon})\\
	=&o(R^{[h(g)+2\epsilon]+1}).
\end{align*}
Applying the above estimate, the equation \eqref{n+1/2P=01} and  Lemma \ref{lem: polynomial growth is polynomial}, we show that 
$P(x)$ is a polynomial satisfying
$$\deg P\leq \max\{2[\frac{n+1}{2}]-2, [h(g)+2\epsilon]\}.$$
By choosing suitable small $\epsilon$, there holds
$$[h(g)+2\epsilon]\leq h(g).$$
Thus one has 
$$\deg P\leq \max\{n-1, h(g)\}.$$
If $P(x)$ is not a constant,  the polynomial $P(x)$ has the following decomposition as before
$$P(x)=H_m(x)+P_{m-1}(x)$$
where $H_m(x)$ is a homogeneous polynomial of degree $m$ and $P_{m-1}(x)$ is a polynomial of degree not greater than $m-1$.

Firstly, we claim that 
\begin{equation}\label{H_m geq 0}
	H_m(x)\geq 0.
\end{equation}
To show that $H_m(x)\geq 0$, we only need to show that $H_m(x)\geq 0$ for any $x\in\mr^n$ with $|x|=1$	since $H_m(x)$ is homogeneous. We argue by contradiction. Suppose that  there exists $x_0\in \mr^n$ with $|x_0|=1$ such that $H_m(x_0)<0.$ Following the same argument of the proof in Theorem \ref{thm: tau(g)}, we can show that along  the curvature $\gamma(t)=tx_0$, for fixed $t_1>0$ and any $t>t_1$, there holds
$$d_g(\gamma(t_1),\gamma(t))\leq C$$
which contradicts to completeness of the metric $g$. Hence we prove claim \eqref{H_m geq 0}.

Secondly, we claim that 
\begin{equation}\label{adeg P leq n-1}
	\deg P\leq n-1
\end{equation}
and  argue by contradiction.

On one hand, if $m=\deg P=n$, 
a   direct computation yields that 
$\Delta^{[\frac{n+1}{2}]}H_m$ equals to  a  constant.
Meanwhile, it is not hard  to obtain that  $\Delta^{[\frac{n+1}{2}]}P_{m-1}=0.$  Due to the equation \eqref{n+1/2P=01},  we must have
$$
	\Delta^{[\frac{n+1}{2}]}H_m=\Delta^{[\frac{n+1}{2}]}P_{m-1}=0.
$$
On the other hand, we suppose that  $m\geq n+1$.  Making use of the equation \eqref{n+1/2P=01}, there holds
\begin{equation}\label{delta H_m=-delta P_m-1}
	\Delta^{[\frac{n+1}{2}]}H_m(x)=-\Delta^{[\frac{n+1}{2}]}P_{m-1}(x).
\end{equation}
It is not hard to check that  either  $\Delta^{[\frac{n+1}{2}]}H_m(x)$ is a polynomial of  degree $m-2[\frac{n+1}{2}]$ or $\Delta^{\frac{n}{2}}H_m(x)=0$. Meanwhile, there holds that either $\Delta^{[\frac{n+1}{2}]}P_{m-1}$ is a polynomial of degree at most $m-1-2[\frac{n+1}{2}]$ or  $\Delta^{[\frac{n+1}{2}]}P_{m-1}=0.$ 
Combing  this facts with the identity \eqref{delta H_m=-delta P_m-1},  one has 
$$
	\Delta^{[\frac{n+1}{2}]}H_m(x)=\Delta^{[\frac{n+1}{2}]}P_{m-1}(x)=0.
$$
Thus, in any situation, there holds
\begin{equation}\label{n/2 Delta H_m=0}
	\Delta^{[\frac{n+1}{2}]}H_m(x)=\Delta^{[\frac{n+1}{2}]}P_{m-1}(x)=0.
\end{equation}
 However, making use of the equations  \eqref{H_m geq 0} and \eqref{n/2 Delta H_m=0},   Lemma \ref{lem: polynomial growth is polynomial} yields that $m=\deg H_m\leq n-1$ which contradicts to our assumption that $m\geq n$.

Thus we prove the claim \eqref{adeg P leq n-1} and finish our proof.

\end{proof}

Chang-Hang-Yang \cite{CHY} (See also Lemma 3.1 in \cite{MQ adv}, \cite{MQ apde}) established an interesting lemma which is crucial in the  proof of Theorem \ref{thm: h(g) leq n-2}.  For the reader's convenience, we repeat their statement as follows.

\begin{lemma}\label{CHY lemma}
	(Proposition 8.1 in \cite{CHY})
	Let $\Omega\subset \ms^n(n\geq 3)$ be an open set and $w_0\in C^\infty(\Omega)$ such that $(\Omega, e^{2w_0}g_{\ms^n})$ is complete. If the scalar curvature $R_{e^{2w_0}g_{\ms^n}}\geq-C$ for some constant $C>0$, then $w_0(x)\to\infty$ as $\mathrm{dist}_{g_{\ms^n}}(x, \partial \Omega)\to0.$
\end{lemma}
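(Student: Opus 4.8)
The plan is to translate the two curvature hypotheses into analytic information of different types — the scalar curvature bound into a differential inequality for the conformal factor, and completeness into boundary behavior — and then to run a barrier/comparison argument built from the background distance function.

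First I would pass from $w_0$ to $u=e^{\frac{n-2}{2}w_0}$, so that $g=u^{4/(n-2)}g_{\ms^n}$ and the conformal transformation law for the scalar curvature reads
$$-\frac{4(n-1)}{n-2}\Delta_{g_{\ms^n}}u+n(n-1)u=R_g\,u^{\frac{n+2}{n-2}}.$$
The hypothesis $R_g\ge -C$ then yields
$$\Delta_{g_{\ms^n}}u\le a\,u+b\,u^{\frac{n+2}{n-2}},\qquad a,b>0,$$
so that $u$ is a positive supersolution of a Yamabe-type operator; equivalently, in terms of $w_0$ one gets $\Delta_{g_{\ms^n}}w_0\le K_0+K_1e^{2w_0}$. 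The claim $w_0\to\infty$ at $\partial\Omega$ is the same as $u\to\infty$ there. Next I would manufacture a subsolution barrier from the distance function. Let $\delta(x)=\mathrm{dist}_{g_{\ms^n}}(x,\partial\Omega)$ and set $\bar w=-\log\delta+\Lambda$ on a one-sided tubular neighborhood $U_{\delta_1}=\{0<\delta<\delta_1\}$. Since $\ms^n$ has Ricci curvature bounded below, the Laplacian comparison theorem gives $\Delta_{g_{\ms^n}}\delta\le C_1$ in the barrier sense, whence $\Delta\bar w\ge \delta^{-2}-C_1\delta^{-1}$. Choosing $\Lambda$ negative enough that $K_1e^{2\Lambda}<1$, one checks for $\delta_1$ small that $\bar w$ is a subsolution, $\Delta\bar w\ge K_0+K_1e^{2\bar w}$, and that $\bar w\le w_0$ on the inner face $\{\delta=\delta_1\}$, which is compact so that $w_0$ is bounded below there. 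The aim is then to conclude $w_0\ge\bar w$ throughout $U_{\delta_1}$ by comparison, since $\bar w=-\log\delta+\Lambda\to\infty$.

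The part I expect to be the main obstacle is supplying the \emph{outer} boundary information that a naive comparison lacks. Completeness of $(\Omega,g)$ means every path exiting $\Omega$ has infinite $g$-length, $\int e^{w_0}\,\ud s_{g_{\ms^n}}=\infty$; along a geodesic approaching $\partial\Omega$ this only forces $\limsup_{\delta\to0}w_0=+\infty$, whereas the comparison on $\{\epsilon<\delta<\delta_1\}$ (pushed to the boundary by letting $\epsilon\to0$) requires the \emph{uniform} bound $\inf_{\{\delta=\epsilon\}}w_0\to\infty$. The device for upgrading the limsup statement to a uniform one is the scalar curvature bound itself, through the weak Harnack inequality for the supersolution $u$: on each small $g_{\ms^n}$-ball the $L^q$-average of $u$ is controlled by its infimum, so $u$ cannot be large only on a thin set. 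Combining this Harnack spreading with completeness — so that each approach to the boundary meets a point where $u$ is large, which an interior gradient estimate promotes to largeness on a whole small ball, and the weak Harnack then propagates across the level set — should yield $\inf_{\{\delta=\epsilon\}}w_0\to\infty$ and close the comparison. I expect the genuine difficulty to lie in making this interplay precise when $\partial\Omega$ is merely the boundary of an arbitrary open set, so that $\delta$ is only Lipschitz and carries a cut locus, forcing all the distance-function estimates and the maximum principle to be used in the barrier/viscosity sense rather than classically.
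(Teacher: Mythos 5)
The paper does not prove this lemma at all: it is imported verbatim as Proposition~8.1 of Chang--Hang--Yang \cite{CHY}, so there is no in-paper argument to compare yours against. Judged on its own terms, your reduction to the differential inequality $\Delta_{g_{\ms^n}}w_0\le K_0+K_1e^{2w_0}$ (equivalently $\Delta u\le au+bu^{\frac{n+2}{n-2}}$ for $u=e^{\frac{n-2}{2}w_0}$) is correct, and you have identified the right obstruction (outer boundary data), but both halves of your plan fail at concrete points.

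The barrier does not exist. You need $\Delta\bar w\ge K_0+K_1e^{2\bar w}>0$ for $\bar w=-\log\delta+\Lambda$, and you get this from the claim $\Delta_{g_{\ms^n}}\delta\le C_1$. That bound holds for the signed distance to a hypersurface with controlled second fundamental form, but for the distance to an arbitrary closed set the Laplacian comparison only gives $\Delta\delta\le(n-1)\cot\delta\sim(n-1)/\delta$ in the barrier sense, whence $\Delta\bar w=\delta^{-2}|\nabla\delta|^2-\delta^{-1}\Delta\delta\ge-(n-2)\delta^{-2}-C\delta^{-1}$, and this worst case is actually attained: when $\partial\Omega$ is a single point, $-\log\delta$ is \emph{superharmonic} for $n\ge3$, $\Delta\bar w=-(n-2)\delta^{-2}+O(\delta^{-1})<0$. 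That is precisely the case this paper uses the lemma for ($\Omega=\ms^n\setminus\{N\}$ in Lemma~\ref{lem: u geq -log|x|}), so the barrier collapses in the main application.

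The Harnack step, which was supposed to supply the missing outer boundary data, is also not available. The weak Harnack inequality for the supersolution $u$ of $\Delta u\le\bigl(a+bu^{4/(n-2)}\bigr)u$ carries a constant depending on an $L^{p}$ norm, $p>n/2$, of the potential $a+bu^{4/(n-2)}$, i.e.\ on an a priori local $L^{2n/(n-2)}$ bound for $u$ near $\partial\Omega$ --- exactly what is not given (and which can genuinely fail, since $\int e^{nw_0}$ need not be locally finite near $\partial\Omega$). This superlinear term is the entire difference between $R_g\ge0$, where the Schoen--Yau argument with the bounded potential $\tfrac{n(n-2)}{4}$ goes through, and $R_g\ge-C$; it cannot be absorbed silently. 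The ``interior gradient estimate'' you invoke to spread pointwise largeness to a ball also does not exist for a one-sided inequality. Finally, note the circularity: if Harnack plus completeness did yield $\inf_{\{\delta=\epsilon\}}w_0\to\infty$, that is already the conclusion of the lemma and the comparison scaffold is redundant; without it, the comparison cannot be closed. As written, the proposal does not contain a proof.
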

With help of above lemma, we are able to establish the following lemma.
\begin{lemma}\label{lem: u geq -log|x|}
	Consider a complete and smooth metric $g=e^{2u}|dx|^2$ on $\mr^n$.  If the scalar curvature $R_g\geq -C$ for some constant $C>0$, there holds
	$$u(x)\geq -2\log(|x|+1)-C.$$
\end{lemma}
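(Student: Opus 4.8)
The plan is to pull the metric back to the round sphere by stereographic projection, so that Lemma \ref{CHY lemma} applies directly and the desired logarithmic rate is produced by the conformal factor of the projection. Let $\pi\colon \ms^n\setminus\{N\}\to\mr^n$ be stereographic projection from the north pole $N$, under which the round metric satisfies $g_{\ms^n}=\left(\frac{2}{1+|x|^2}\right)^2|dx|^2$. Setting $\Omega:=\ms^n\setminus\{N\}$ and
$$w_0(x):=u(x)+\log\frac{1+|x|^2}{2},$$
one checks that $g=e^{2u}|dx|^2=e^{2w_0}g_{\ms^n}$ on $\Omega$, so the conformal metric on $\mr^n$ and the conformal metric on $\Omega$ are isometric via $\pi$.

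Next I would verify that $(\Omega,e^{2w_0}g_{\ms^n})$ meets the hypotheses of Lemma \ref{CHY lemma}. Completeness is inherited from that of $g$ on $\mr^n$, since $\pi$ is a diffeomorphism identifying the two as the same Riemannian manifold; and the scalar curvature is an intrinsic invariant, so $R_{e^{2w_0}g_{\ms^n}}=R_g\geq -C$. As $\partial\Omega=\{N\}$, Lemma \ref{CHY lemma} then gives $w_0(x)\to+\infty$ as $\mathrm{dist}_{g_{\ms^n}}(x,N)\to 0$, that is, as $|x|\to\infty$.

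From here the conclusion is almost immediate. Because $w_0$ blows up to $+\infty$ near $N$, it is bounded below on a punctured neighborhood of $N$; on the complementary compact subset of $\Omega$ the smooth function $w_0$ is again bounded below. Hence $w_0\geq -C_0$ on all of $\Omega$ for some constant $C_0>0$, which unwinds to
$$u(x)\geq -\log\frac{1+|x|^2}{2}-C_0\geq -2\log(|x|+1)-C,$$
where the last step uses $1+|x|^2\leq (1+|x|)^2$ to pass to the stated form.

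The point I want to emphasize is that Lemma \ref{CHY lemma} is only qualitative: it gives $w_0\to+\infty$, not a rate, and the entire quantitative logarithmic bound comes from the conformal factor $\log\frac{1+|x|^2}{2}$ of the stereographic projection. Thus the only content extracted from the curvature hypothesis is that $w_0$ is bounded below, and the main thing to check carefully is the correct transfer of completeness and of the scalar-curvature bound through the conformal change of background metric, together with the identification $\mathrm{dist}_{g_{\ms^n}}(x,N)\to 0 \Leftrightarrow |x|\to\infty$.
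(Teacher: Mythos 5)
Your proposal is correct and follows essentially the same route as the paper: pull back to the sphere via stereographic projection, apply Lemma \ref{CHY lemma} to conclude $u(x)+\log\frac{1+|x|^2}{2}\to\infty$ as $|x|\to\infty$, and read off the logarithmic lower bound from the conformal factor. Your additional remarks on transferring completeness and the scalar-curvature bound, and on deducing the global lower bound for $w_0$ by compactness, are correct fillings-in of steps the paper leaves implicit.
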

\begin{proof}
	Via a stereographic projection 
	$$\Psi: \ms^n\backslash \{N\}\rightarrow \mr^n$$
	where $N$ is the north pole of the standard sphere $\ms^n$.  With help of such stereographic projection, the standard metric $g_{\ms^n}$ on $\ms^n$ can be written as 
	$$g_{\ms^n}=\left(\frac{2}{1+|x|^2}\right)^2|dx|^2.$$
	Thus applying Lemma \ref{CHY lemma}, one has
	$$u(x)+\log\frac{|x|^2+1}{2}\to \infty$$
	as $|x|\to\infty.$ Thus  we finish our proof.
\end{proof}

With help of above preparations, we are going to finish the proofs of  our main theorems.

{\bf Proof of  Theorem \ref{thm: h(g) leq n-2} and Theorem \ref{thm: normal decomposition}:}

	Based on our  assumption that the scalar curvature $R_g\geq -C$,  Lemma \ref{lem: u geq -log|x|} yields that
\begin{equation}\label{u^- growth}
	\int_{B_R(0)}u^-(x)\ud x=o(R^{n+1}).
\end{equation}
Set the same notation as before 
$$P(x):=u(x)-\mathcal{L}(Qe^{nu})$$
and then $P(x)$ satisfies the equation
\begin{equation}\label{n+1/2P=0}
	\Delta^{[\frac{n+1}{2}]}P=0.
\end{equation}
Making use of the estimate \eqref{u^- growth} and Lemma \ref{lem: B_R(0)Lf}, for $R\gg1$, there holds
\begin{equation}\label{P^- growth}
	\int_{B_R(0)}P^-\ud x\leq \int_{B_R(0)}u^-\ud x+\int_{B_R(0)}|\mathcal{L}(Qe^{nu})|\ud x=o(R^{n+1})
\end{equation}
Applying Lemma \ref{lem: polynomial growth is polynomial}(by setting $\varphi=-P$),   we show that $P(x)$ is a polynomial with degree  satisfying
\begin{equation}\label{deg P}
	\deg(P)\leq n-1.
\end{equation}
 With help of Lemma \ref{lem: e^nLf on B_R+1 B_R-1} and the fact $P(x)$ is a polynomial, one has
\begin{align*}
	\int_{B_R(0)}e^{nu}\ud x=&\int_{B_R(0)}e^{nP+n\mathcal{L}(Qe^{nu})}\ud x\\
	\leq &e^{CR^{\deg P}}\int_{B_R(0)}e^{n\mathcal{L}(Qe^{nu})}\ud x\\
	\leq &Ce^{CR^{\deg P}}R^C
\end{align*}
which yields that
\begin{equation}\label{h(g) finite}
	h(g)\leq \deg P. 
\end{equation}

	If $P(x)$ is not a constant, we can decompose $P(x)$ as before 
	\begin{equation}\label{decomposition P}
		P(x)=H_{m}(x)+P_{m-1}(x)
	\end{equation}
	where 
	$H_m(x)$ is a homogeneous polynomial with degree $m=\deg(P)$ and $P_{m-1}$ is a polynomial with $$\deg P_{m-1}\leq m-1.$$ 
Due  to the estimate \eqref{H_m geq 0} and the fact that $H_m$ is homogeneous, 	$m$ must  an even integer i.e.
\begin{equation}\label{deg P=even}
	\deg P \;\mathrm{is\; even}.
\end{equation}
	
Secondly, we claim that
\begin{equation}\label{claim 3}
	h(g)=\deg P.
\end{equation}

Since the non-constant homogeneous polynomial $H_m(x)\geq 0$, there exists $\delta>0$ and $x_3\in \mr^n$ with $|x_3|=1$ such that  for some constant $$H_m(y)\geq c_1>0,\quad y\in B_{\delta}(x_3).$$
Notice that  for $t>0$ there holds
$$B_1(tx_3)= t B_{\frac{1}{t}}(x_3):=\{y\in\mr^n|y=tx, x\in B_{\frac{1}{t}}(x_3)\}.$$
Immediately, for $t>\frac{1}{\delta}$, one has
$$B_1(tx_3)\subset tB_{\delta}(x_3).$$
Then,  for $t>\frac{1}{\delta}$ and $x\in B_1(tx_3)$, we have 
$$P(x)=t^mH_m(\frac{x}{t})+P_{m-1}(x)\geq c_1t^m-C(t+1)^{m-1}.$$
Thus for $R\gg1$ and $x_R:=Rx_3$, one has
\begin{equation}\label{P geq R^m}
	P(x)\geq c_2R^m
\end{equation}
for some positive constant $c_2$.
Making use of Lemma \ref{lem: B_1 L(f)}, the estimate \eqref{P geq R^m} and Jensen's inequality, for $R\gg1$, one has  
\begin{align*}
	\int_{B_{2R}(0)}e^{nu}\ud x \geq & \int_{B_1(x_R)}e^{nP+n\mathcal{L}(Qe^{nu})}\ud x\\
		\geq &e^{nc_2R^m}\int_{B_1(x_R)}e^{n\mathcal{L}(Qe^{nu})}\ud x\\
		\geq &e^{nc_2R^m}|B_1(x_R)|\exp\left(\frac{1}{|B_1(x_R)|}\int_{B_1(x_R)}n\mathcal{L}(Qe^{nu})\ud x\right)\\
		\geq &Ce^{nc_2R^m-(n\alpha_0+o(1))\log R}.
\end{align*}
Then we obtain that 
$$\log V_g(B_{2R}(0))\geq nc_2R^m-(n\alpha_0+o(1))\log R-C$$
which yields that the volume entropy
$$h(g)\geq m.$$
Combing with the upper bound estimate \eqref{h(g) finite}, we prove the claim \eqref{claim 3}.

Thus, making use of \eqref{deg P}, \eqref{deg P=even} and \eqref{claim 3}, we prove Theorem \ref{thm: h(g) leq n-2}.

With help of \eqref{deg P}, for $|x|\gg1$ and  any $y\in B_{|x|^{2-n}}(x)$, one has
\begin{equation}\label{P(x) geq P(y)-C }
	P(x)\geq P(y)-C.
\end{equation}
Applying the estimate \eqref{P(x) geq P(y)-C }, Lemma \ref{lem: B_|x|^p L(f)} and  Lemma \ref{lem: u geq -log|x|}, for $|x|\gg1$,  there holds
\begin{align*}
	&P(x)\\
	\geq & -C+\frac{1}{|B_{|x|^{2-n}}(x)|}\int_{B_{|x|^{2-n}}(x)}P(y)\ud y\\
	\geq &-C+\frac{1}{|B_{|x|^{2-n}}(x)|}\int_{B_{|x|^{2-n}}(x)}u(y)\ud y-\frac{1}{|B_{|x|^{2-n}}(x)|}\int_{B_{|x|^{2-n}}(x)}\mathcal{L}(Qe^{nu})\ud y\\
	\geq &-C-2\log(|x|+1)-(-\alpha_0+o(1))\log|x|\\
	\geq &-C\log(|x|+2).
\end{align*}
Thus we finish the proof of Theorem \ref{thm: normal decomposition}.

\section{Control of $\tau(g)$ by scalar curvature}\label{sec: finite tau(g)}

As noted earlier, Chang, Qing, and Yang demonstrated in \cite{CQY} that a metric with finite total Q-curvature is normal if the scalar curvature remains non-negative near infinity in the four-dimensional case. For higher dimensions, a similar proof applies (see \cite{Fa}, \cite{NX}, \cite{Li 23 Q-curvature}).
\begin{prop}\label{prop}
		Consider a  smooth and complete metric $g=e^{2u}|dx|^2$ on $\mr^n$  with finite total Q-curvature where the integer $n\geq 3$. Suppose that the scalar curvature $R_g\geq 0$ near infinity. Then the metric $g$ is normal.
\end{prop}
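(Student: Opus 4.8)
The plan is to reduce the statement to Theorem~\ref{thm: normal decomposition} and then use the sign of the scalar curvature to force the polynomial part to be constant. Since $g$ is smooth, the scalar curvature $R_g$ is continuous, hence bounded below on any fixed ball; combined with $R_g\geq 0$ near infinity this gives $R_g\geq -C$ on all of $\mr^n$. Theorem~\ref{thm: normal decomposition} then applies and yields the decomposition
$$u(x)=\mathcal{L}(Qe^{nu})(x)+P(x),$$
where $P$ is a polynomial of even degree $m:=\deg P\leq n-1$ with $P(x)\geq -C\log(|x|+2)$. By definition, $g$ is normal precisely when $P$ is constant, so it suffices to rule out $m\geq 2$. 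Arguing by contradiction, assume $P$ is non-constant and write $P=H_m+P_{m-1}$ with $H_m$ the leading homogeneous part. The lower bound $P\geq -C\log(|x|+2)$ forces $H_m\geq 0$ (otherwise $P$ would tend to $-\infty$ polynomially along some ray), and since $H_m\not\equiv 0$ this gives
$$c_P:=\fint_{\ms^{n-1}}H_m(\omega)\,\ud\omega>0.$$

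Next I would exploit the curvature sign through superharmonicity. Recall $R_g=2(n-1)e^{-2u}\big(-\Delta u-\tfrac{n-2}{2}|\nabla u|^2\big)$, so $R_g\geq 0$ outside some ball $B_{R_0}(0)$ gives $\Delta u\leq -\tfrac{n-2}{2}|\nabla u|^2\leq 0$ there, i.e.\ $u$ is superharmonic near infinity. Writing $\bar u(r):=\fint_{\partial B_r(0)}u\,\ud\sigma$, the divergence theorem gives $\tfrac{\ud}{\ud r}\bar u(r)=\tfrac{1}{|\partial B_r|}\int_{B_r}\Delta u\,\ud x$. For $r>R_0$ the exterior contribution is non-positive, so $\int_{B_r}\Delta u\leq\int_{B_{R_0}}\Delta u=:A$, and hence $\tfrac{\ud}{\ud r}\bar u(r)\leq A\,|\partial B_r|^{-1}$. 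Integrating and using $n\geq 3$ (so that $\int^{\infty}s^{1-n}\,\ud s<\infty$), I obtain the uniform upper bound $\bar u(r)\leq C$ for all large $r$.

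Finally I would compare this with the two pieces of the decomposition. For the polynomial part, $\bar P(r)=c_P r^m+O(r^{m-1})\to+\infty$ since $c_P>0$ and $m\geq 2$. For the potential, Lemma~\ref{lem: L(f)} writes $\mathcal{L}(Qe^{nu})(x)=(-\alpha_0+o(1))\log|x|$ plus a local term, where $\alpha_0$ is given by \eqref{alpha_0 def}; a Fubini estimate on the spherical average of the local term (bounding $\int_{\partial B_r\cap B_1(y)}\log\frac{1}{|x-y|}\,\ud\sigma(x)$ uniformly in $r,y$ and using $Qe^{nu}\in L^1$ together with the normalization $|\partial B_r|^{-1}=O(r^{1-n})$) yields $\fint_{\partial B_r}\mathcal{L}(Qe^{nu})\geq -C\log r-C$. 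Adding the two gives $\bar u(r)\geq c_P r^m+O(r^{m-1})-C\log r\to+\infty$, contradicting $\bar u(r)\leq C$. Therefore $P$ is constant and $g$ is normal.

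I expect the only genuinely delicate point to be the spherical-average bound on the logarithmic potential $\mathcal{L}(Qe^{nu})$: the lemmas recalled above are stated for solid ball averages, so one must separately control the averaged local term $\fint_{\partial B_r}\int_{B_1(x)}\log\frac{1}{|x-y|}\,Qe^{nu}(y)\,\ud y\,\ud\sigma(x)$. Everything else — the passage $R_g\geq 0$ near infinity $\Rightarrow R_g\geq -C$, the superharmonic mean-value computation, and the positivity $c_P>0$ — is routine once the decomposition of Theorem~\ref{thm: normal decomposition} is in hand.
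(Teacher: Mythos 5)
Your argument is correct, but it takes a genuinely different route from the paper, whose entire proof of Proposition \ref{prop} consists of setting $h:=u-\mathcal{L}(Qe^{nu})$, observing $\Delta^{[\frac{n+1}{2}]}h=0$, and deferring to the proof of Theorem 4.2 in \cite{Li 23 Q-curvature}. You instead make the proof self-contained within this paper: you pass from $R_g\geq 0$ near infinity to $R_g\geq -C$ globally (legitimate, by continuity on compact sets), invoke Theorem \ref{thm: normal decomposition} --- which is proved in Section \ref{section:non-normal metrics} with no reference to Proposition \ref{prop}, so there is no circularity --- to get $u=\mathcal{L}(Qe^{nu})+P$ with $H_m\geq 0$ and hence $c_P=\fint_{\ms^{n-1}}H_m>0$ when $P$ is non-constant, and then rule this out by the mean-value computation: $\Delta u\leq -\tfrac{n-2}{2}|\nabla u|^2\leq 0$ outside a ball gives $\tfrac{\ud}{\ud r}\bar u(r)\leq A|\partial B_r(0)|^{-1}$, which integrates to $\bar u(r)\leq C$ for $n\geq 3$, contradicting $\bar u(r)\geq c_Pr^m-C\log r\to\infty$. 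All steps check out. The one point you flag as delicate --- the lower bound on $\fint_{\partial B_r(0)}\mathcal{L}(Qe^{nu})\,\ud\sigma$ --- is exactly Lemma \ref{lem: 4.3}, already quoted in this section from \cite{Li 24 revisited} and requiring only $Qe^{nu}\in L^1$, so you may cite it directly instead of redoing the Fubini estimate (though your sketch of that estimate is also sound, since the averaged kernel $|\partial B_r(0)|^{-1}\int_{\partial B_r(0)\cap B_1(y)}\log\tfrac{1}{|x-y|}\,\ud\sigma(x)$ is bounded uniformly in $r$ and $y$). What your route buys is independence from the external reference; what the paper's route buys is brevity. The underlying mechanism --- controlling spherical averages of $\Delta u$ by the sign of $R_g$ --- is the same one driving the cited Theorem 4.2, so the two proofs are close in spirit even though your bookkeeping through Theorem \ref{thm: normal decomposition} is different.
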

\begin{proof}
	Set the function $h:=u-\mathcal{L}(Qe^{nu})$. Due to the equation \eqref{[n+1]/2}, there holds
	$$\Delta^{[\frac{n+1}{2}]}h=0.$$
Then	it is not hard to 	imitate the proof of Theorem 4.2 in \cite{Li 23 Q-curvature} to obtain our result and we omit the details.
\end{proof}

To complete the proofs of Theorem \ref{thm: tua(g) leq 1} and Theorem \ref{thm: tau(g) =0}, we need  the crucial lemmas established in \cite{Li 24 revisited}. Recall  $\mathcal{L}(Qe^{nu})$ given by \eqref{mathcal L} and $\alpha_0$  given by \eqref{alpha_0 def}. We repeat the statements of lemmas as follows.
\begin{lemma}\label{lem: 4.1}(Lemma 2.5 in \cite{Li 24 revisited})
	As $r\to\infty$, there holds
	$$r^2\cdot\frac{1}{|\partial B_r(0)|}\int_{\partial B_r(0)}(-\Delta)\mathcal{L}(Qe^{nu})\ud \sigma\to (n-2)\alpha_0.$$
\end{lemma}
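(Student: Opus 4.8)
The plan is to compute $(-\Delta)\mathcal{L}(Qe^{nu})$ in closed form and then exchange the limit with a spherical average by dominated convergence. Write $f:=Qe^{nu}\in L^1(\mr^n)$ and split the kernel as $\log\frac{|y|}{|x-y|}=\log|y|-\log|x-y|$, so that only the second term depends on $x$. Since $\Delta_x\log|x-y|=\frac{n-2}{|x-y|^2}$ for $x\neq y$ and $|x-y|^{-2}\in L^1_{loc}(\mr^n)$ for $n\geq 3$, differentiating under the integral gives the pointwise identity
$$(-\Delta)\mathcal{L}(f)(x)=\frac{2(n-2)}{(n-1)!|\mathbb{S}^n|}\int_{\mr^n}\frac{f(y)}{|x-y|^2}\,\ud y .$$
Averaging over $\partial B_r(0)$ and using Fubini, I would rewrite the quantity under study as
$$r^2\cdot\frac{1}{|\partial B_r(0)|}\int_{\partial B_r(0)}(-\Delta)\mathcal{L}(f)\,\ud\sigma=\frac{2(n-2)}{(n-1)!|\mathbb{S}^n|}\int_{\mr^n}f(y)\,\Phi_r(y)\,\ud y,\qquad \Phi_r(y):=\frac{1}{|\partial B_r(0)|}\int_{\partial B_r(0)}\frac{r^2}{|x-y|^2}\,\ud\sigma(x).$$

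The pointwise behaviour of $\Phi_r$ is immediate: for fixed $y$ one has $|x-y|^2=r^2-2x\cdot y+|y|^2$ on $\partial B_r(0)$, hence $\frac{r^2}{|x-y|^2}\to1$ uniformly in $x$ and $\Phi_r(y)\to1$ as $r\to\infty$. Granting an integrable bound for $\Phi_r$, dominated convergence would then yield
$$\lim_{r\to\infty}\frac{2(n-2)}{(n-1)!|\mathbb{S}^n|}\int_{\mr^n}f\,\Phi_r\,\ud y=\frac{2(n-2)}{(n-1)!|\mathbb{S}^n|}\int_{\mr^n}f\,\ud y=(n-2)\alpha_0,$$
which is precisely the asserted limit. (Alternatively one may feed the expansion of Lemma \ref{lem: L(f)} into the radial identity $\frac{1}{|\partial B_r(0)|}\int_{\partial B_r(0)}\Delta v\,\ud\sigma=\bar v''(r)+\frac{n-1}{r}\bar v'(r)$ for $v=\mathcal{L}(f)$ and $\bar v(r)=\frac{1}{|\partial B_r(0)|}\int_{\partial B_r(0)}v\,\ud\sigma$, but this leads to the same averaged kernel $\Phi_r$.)

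The crux, and the step I expect to be the main obstacle, is producing a dominating function for $\Phi_r$. In the two outer regions the bound is cheap: for $|y|\leq r/2$ or $|y|\geq 2r$ one has $|x-y|\geq\tfrac12\max\{r,|y|\}$ on $\partial B_r(0)$, so $\Phi_r(y)\leq 4$ uniformly in $r$, which is integrable against $f\in L^1$. All the trouble sits in the shell $\{r/2<|y|<2r\}$, where $y$ may approach $\partial B_r(0)$ and the kernel becomes singular. Writing $s=|y|$ and passing to polar coordinates,
$$\Phi_r(y)=\frac{r^2}{Z}\int_0^{\pi}\frac{\sin^{n-2}\theta}{(r-s)^2+2rs(1-\cos\theta)}\,\ud\theta,\qquad Z=\int_0^{\pi}\sin^{n-2}\theta\,\ud\theta,$$
and near $\theta=0$ the denominator is comparable to $(r-s)^2+r^2\theta^2$. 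A rescaling $\theta=\frac{|r-s|}{r}\phi$ then shows that for $n\geq 4$ the $\theta$-integral is $O(r^{-2})$ uniformly throughout the shell, so $\Phi_r\leq C$ everywhere and the constant $C$ dominates; dominated convergence applies and the proof is complete in these dimensions.

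The borderline dimension is $n=3$, where the $\theta$-integral can be evaluated exactly to give $\Phi_r(y)=\frac{r}{2|y|}\log\frac{r+|y|}{\,\bigl|r-|y|\bigr|\,}$. This is bounded away from the sphere but carries a genuine logarithmic singularity as $|y|\to r$, so no constant dominates and pure $L^1$ integrability of $f$ is not by itself enough. Here I would split the shell into a thick part $\{\,\bigl|\,|y|-r\,\bigr|\geq\eta_r\}$, on which $\Phi_r\lesssim\log(r/\eta_r)$, and a thin layer $\{\,\bigl|\,|y|-r\,\bigr|<\eta_r\}$, controlling the first piece by the decay of $\int_{|y|>r/2}|f|$ and the second by the absolute continuity of $y\mapsto\int|f|$ against the integrable weight $\log\frac{1}{\bigl|\,|y|-r\,\bigr|}$, while tuning $\eta_r\to0$. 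Establishing that both contributions vanish as $r\to\infty$, which is where the finer structure of $f=Qe^{nu}$ beyond mere membership in $L^1$ must be exploited, is the delicate heart of the argument.
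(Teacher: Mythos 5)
The paper does not actually prove this lemma: it is imported verbatim from Lemma~2.5 of \cite{Li 24 revisited}, so there is no in-paper argument to compare against. Your reduction is nonetheless the natural one. The identity $\Delta_x\log|x-y|=(n-2)|x-y|^{-2}$ gives $(-\Delta)\mathcal{L}(f)(x)=\frac{2(n-2)}{(n-1)!|\mathbb{S}^n|}\int_{\mr^n}|x-y|^{-2}f(y)\,\ud y$ (legitimate since $f=Qe^{nu}$ is smooth and $|x-y|^{-2}$ is locally integrable for $n\geq3$), the constant matches $(n-2)\alpha_0$, and for $n\geq4$ your uniform bound $\Phi_r\leq C(n)$ (from $1-\cos\theta\geq 2\theta^2/\pi^2$ and $\int_0^\pi\theta^{n-4}\,\ud\theta<\infty$) is correct, so dominated convergence closes those dimensions.

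The gap is exactly where you locate it, and it is genuine rather than a routine technicality: for $n=3$ your final step cannot be completed from $Qe^{3u}\in L^1$ alone. Indeed, with $\Phi_r(y)=\frac{r}{2|y|}\log\frac{r+|y|}{\bigl||y|-r\bigr|}$, take $f=\sum_k f_k$ with $f_k\geq0$ smooth, supported in $\{\bigl||y|-k\bigr|<e^{-4^k}\}$ and $\int f_k=2^{-k}$. Then $f\in L^1(\mr^3)$, the normal metric $e^{2\mathcal{L}(f)}|dx|^2$ is complete with finite total Q-curvature (since $\alpha_0<1$), yet $\int f\,\Phi_k\,\ud y\geq \frac{1}{3}4^k\cdot 2^{-k}+O(1)\to\infty$ along $r=k$. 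This shows that the pairing of an $L^1$ density with the weight $\log\frac{1}{\bigl||y|-r\bigr|}$ over a thin shell is \emph{not} controlled by the $L^1$ mass of that shell, so the "absolute continuity against an integrable weight" device you invoke does not exist; whatever closes the argument must use structure beyond integrability (in the setting where the lemma is applied, the metric is normal and $R_g$ has a sign near infinity, and one could also retreat to an additional averaging in $r$), and your proposal supplies none of it. Since $n=3$ lies squarely in the range $n\geq3$ for which the paper invokes the lemma, the proof is incomplete in precisely the one dimension where the difficulty is real.
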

\begin{lemma}\label{lem: 4.2}(Lemma 2.7 in \cite{Li 24 revisited})
	As $r\to\infty$, there holds
	$$r^2\cdot\frac{1}{|\partial B_r(0)|}\int_{\partial B_r(0)}|\nabla \mathcal{L}(Qe^{nu})|^2\ud \sigma\to \alpha_0^2.$$
\end{lemma}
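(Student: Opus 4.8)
The plan is to reduce the statement to an averaged $L^2$ estimate for the rescaled gradient of $\mathcal{L}(f)$, where $f:=Qe^{nu}\in L^1(\mr^n)$, and to exploit that, viewed from a point $x$ with $|x|$ large, almost all the mass of $f$ sits near the origin. Differentiating the logarithmic kernel in \eqref{mathcal L} gives
$$\nabla\mathcal{L}(f)(x)=-\frac{2}{(n-1)!|\ms^n|}\int_{\mr^n}\frac{x-y}{|x-y|^2}f(y)\,\ud y,$$
and for each fixed $y$ one has $\frac{r(x-y)}{|x-y|^2}\to\omega$ as $r:=|x|\to\infty$, where $\omega:=x/|x|$. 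Since $\alpha_0=\frac{2}{(n-1)!|\ms^n|}\int_{\mr^n}f\,\ud y$, this heuristic predicts $r\,\nabla\mathcal{L}(f)(x)\to-\alpha_0\omega$, hence $r^2|\nabla\mathcal{L}(f)|^2\to\alpha_0^2$, which is exactly the claimed limit.

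To make this rigorous I would set $V(x):=r\,\nabla\mathcal{L}(f)(x)+\alpha_0\omega$. The elementary identity $r^2|\nabla\mathcal{L}(f)|^2=|V|^2-2\alpha_0\,V\cdot\omega+\alpha_0^2$, together with the Cauchy--Schwarz bound $\big|\tfrac{1}{|\partial B_r(0)|}\int_{\partial B_r(0)}V\cdot\omega\,\ud\sigma\big|\le\big(\tfrac{1}{|\partial B_r(0)|}\int_{\partial B_r(0)}|V|^2\,\ud\sigma\big)^{1/2}$, reduces the whole lemma to the single claim
$$\frac{1}{|\partial B_r(0)|}\int_{\partial B_r(0)}|V(x)|^2\,\ud\sigma\to 0\qquad\text{as }r\to\infty.$$
Writing $V(x)=-\frac{2}{(n-1)!|\ms^n|}\int_{\mr^n}\big(\frac{r(x-y)}{|x-y|^2}-\omega\big)f(y)\,\ud y$, I would fix a large radius $\rho$ and split the $y$-integral over three regions: the core $B_\rho(0)$, the far region $\{\rho<|y|\le r/2\}\cup\{|y|\ge 2r\}$, and the near-diagonal annulus $A:=B_{2r}(0)\setminus B_{r/2}(0)$. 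On $B_\rho(0)$ the kernel $\frac{r(x-y)}{|x-y|^2}-\omega$ is $O(\rho/r)$ uniformly for $x\in\partial B_r(0)$, so this piece is $O(\rho\|f\|_{L^1}/r)\to0$. On the far region the kernel is bounded (there $\frac{r}{|x-y|}\le 2$), so its contribution is controlled by the tail $\int_{|y|>\rho}|f|$, made small by choosing $\rho$ large first.

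The main obstacle is the near-diagonal annulus $A$, where the kernel is genuinely singular at $y=x\in\partial B_r(0)$. Here I would average in $x$ before estimating: by Cauchy--Schwarz in the measure $|f|\,\ud y$ and Fubini,
\begin{align*}
&\frac{1}{|\partial B_r(0)|}\int_{\partial B_r(0)}\Big|\int_A\frac{r(x-y)}{|x-y|^2}f(y)\,\ud y\Big|^2\ud\sigma\\
&\qquad\le\Big(\int_A|f|\Big)\int_A|f(y)|\Big(\frac{1}{|\partial B_r(0)|}\int_{\partial B_r(0)}\frac{r^2}{|x-y|^2}\,\ud\sigma(x)\Big)\ud y,
\end{align*}
so that the problem becomes bounding the spherical average of $|x-y|^{-2}$ for $y\in A$. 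For $n\ge4$ this average is bounded by a dimensional constant, and since the annular mass $\int_A|f|\to0$ as $r\to\infty$ (it is a tail of an $L^1$ function), the whole term vanishes. The delicate case is $n=3$, where $\partial B_r(0)$ is two-dimensional and the spherical average of $|x-y|^{-2}$ diverges for $y$ on the sphere; I expect this borderline dimension to be the only real difficulty. It is precisely the estimate carried out in \cite{Li 24 revisited}, and it is resolved by a finer decomposition of $A$ dyadic in the distance $\big||y|-r\big|$, on each shell of which the kernel is bounded while the mass remains a vanishing tail, so that summing removes the logarithmic loss. Collecting the three pieces and letting first $r\to\infty$ and then $\rho\to\infty$ yields $\frac{1}{|\partial B_r(0)|}\int_{\partial B_r(0)}|V|^2\,\ud\sigma\to0$, and hence the lemma.
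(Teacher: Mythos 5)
The paper does not actually prove this statement: Lemma \ref{lem: 4.2} is imported verbatim from Lemma 2.7 of \cite{Li 24 revisited}, so there is no in-paper argument to compare against. Judged on its own terms, your reduction to $\frac{1}{|\partial B_r(0)|}\int_{\partial B_r(0)}|V|^2\,\ud\sigma\to 0$ is correct, and the core and far-region estimates, as well as the Cauchy--Schwarz/Fubini treatment of the near-diagonal annulus for $n\ge 4$, are sound.

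The genuine gap is exactly where you flag the "only real difficulty," namely $n=3$, and the fix you sketch does not close it. On the dyadic shell $A_j=\{2^{j-1}<\big||y|-r\big|\le 2^j\}$ the spherical average $\frac{1}{|\partial B_r(0)|}\int_{\partial B_r(0)}\frac{r^2}{|x-y|^2}\,\ud\sigma(x)$ is not bounded by a constant; it is of size $\log\frac{r}{2^j}$, so summing gives a bound of the form $\sum_j \log(r2^{-j})\int_{A_j}|f|$, whose worst term is $(\log r)\int_{\{||y|-r|\le 1\}}|f|$. That $\int_{\{||y|-r|\le 1\}}|f|\to 0$ follows from $f\in L^1$, but multiplying by $\log r$ destroys the conclusion: the product need not vanish. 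This is not a removable technicality: for a general smooth $f\in L^1(\mr^3)$ the averaged statement is simply false. Take $f$ to be a sum of bumps of mass $m_k\to 0$ and radius $\epsilon_k$ centered at points $p_k$ with $|p_k|=r_k\to\infty$; then $\int_{\partial B_{r_k}(0)}|\nabla\mathcal{L}(f)|^2\,\ud\sigma\gtrsim m_k^2\log(1/\epsilon_k)$, and since $\frac{r^2}{|\partial B_r(0)|}$ is a dimensional constant in $\mr^3$, choosing $\epsilon_k=e^{-k/m_k^2}$ makes the left-hand side of the lemma blow up along $r=r_k$ while $\alpha_0$ stays small. Consequently any correct proof in dimension three must invoke more than $Qe^{nu}\in L^1$ --- e.g.\ the local estimates from \cite{Li 24 revisited} that exploit the equation, completeness, or the scalar curvature hypothesis under which the lemma is actually applied --- and your proposal neither identifies what that extra input is nor uses it.
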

\begin{lemma}\label{lem: 4.3}(Lemma 2.8 in \cite{Li 24 revisited})
	There holds
	$$\frac{1}{|\partial B_r(0)|}\int_{\partial B_r(0)}\mathcal{L}(Qe^{nu})\ud\sigma =(-\alpha_0+o(1))\log r$$
	where $o(1)\to 0$ as $r\to\infty.$
\end{lemma}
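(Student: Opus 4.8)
The plan is to average the pointwise expansion of Lemma \ref{lem: L(f)} over the sphere $\partial B_r(0)$. Writing $f = Qe^{nu}$, so that the constant $\alpha$ there equals $\alpha_0$ from \eqref{alpha_0 def}, Lemma \ref{lem: L(f)} gives, for every $x$ with $|x|\gg 1$,
$$\mathcal{L}(Qe^{nu})(x) = (-\alpha_0 + o(1))\log|x| + \frac{2}{(n-1)!|\mathbb{S}^n|}\int_{B_1(x)}\log\frac{1}{|x-y|}Q(y)e^{nu(y)}\,\ud y,$$
where the $o(1)$ depends only on $|x|$ and tends to $0$ as $|x|\to\infty$; in particular it is uniform on each sphere $\{|x| = r\}$. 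Averaging this identity over $\partial B_r(0)$ splits the spherical mean of $\mathcal{L}(Qe^{nu})$ into a main term and a local term, and I would estimate each separately.

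For the main term, since $|x| = r$ on $\partial B_r(0)$ and the error is uniform there, one gets
$$\frac{1}{|\partial B_r(0)|}\int_{\partial B_r(0)}(-\alpha_0 + o(1))\log|x|\,\ud\sigma = (-\alpha_0 + o(1))\log r,$$
which is precisely the asserted leading behaviour, so it only remains to show the local term is negligible compared with $\log r$. For the local term, I would apply Tonelli's theorem to interchange the integrations, which (up to the fixed constant $\frac{2}{(n-1)!|\mathbb{S}^n|}$) produces
$$\frac{1}{|\partial B_r(0)|}\int_{\mr^n}Q(y)e^{nu(y)}\left(\int_{\partial B_r(0)\cap B_1(y)}\log\frac{1}{|x-y|}\,\ud\sigma(x)\right)\ud y.$$
The inner integral is supported on the spherical cap $\partial B_r(0)\cap B_1(y)$, whose $(n-1)$-dimensional area is bounded by a constant independent of $r\geq 1$ and $y$, and over which the logarithmic singularity is integrable since $\int_{|z'|\le 1}\bigl|\log|z'|\bigr|\,\ud z' <\infty$ in dimension $n-1$. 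Hence the inner integral is bounded uniformly in $y$ and $r$, and using $Qe^{nu}\in L^1(\mr^n)$ the whole local term is $O\!\left(|\partial B_r(0)|^{-1}\right) = O(r^{-(n-1)}) = o(1)$. Combining the two estimates yields the claim.

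The main obstacle is the bookkeeping in the local term: establishing that both the area of the cap $\partial B_r(0)\cap B_1(y)$ and the integral of the logarithmic singularity over it are bounded uniformly in $r$. For large $r$ the sphere is nearly flat, so the cap is comparable to a flat unit disk and the bound is intuitively clear, but this comparison must be made uniform in $r$ (e.g.\ by writing the cap as a graph over the tangent plane with gradient controlled by $1/r\le 1$ and projecting onto a disk of radius at most $1$). A secondary point to verify is that the error $o(1)$ in Lemma \ref{lem: L(f)} is genuinely uniform on spheres; this follows from its proof, since the correction depends only on $|x|$ and the tail mass of $Qe^{nu}$. An alternative, entirely self-contained route avoids Lemma \ref{lem: L(f)}: compute the spherical mean $A(r,\rho) = \frac{1}{|\mathbb{S}^{n-1}|}\int_{\mathbb{S}^{n-1}}\log|r\omega - \rho e_1|\,\ud\omega$ of the kernel directly, observe from the symmetry of $r^2+\rho^2 - 2r\rho\cos\theta$ in $(r,\rho)$ that $A(r,\rho) = \log\max(r,\rho) + \phi\!\left(\frac{\min(r,\rho)}{\max(r,\rho)}\right)$ with $\phi$ bounded and continuous on $[0,1]$, and then pass the limit $\frac{\log|y| - A(r,|y|)}{\log r}\to -1$ inside the integral against $Qe^{nu}$ by dominated convergence.
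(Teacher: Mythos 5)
Your proof is correct, but note that this paper contains no proof of the statement to compare against: the lemma is imported verbatim as Lemma 2.8 of \cite{Li 24 revisited}, and the closest internal relatives are the ball-averaged analogues (Lemmas \ref{lem: B_1 L(f)}--\ref{lem: B_|x|^p L(f)}) quoted from \cite{Li 23 Q-curvature}. Your argument is the natural spherical counterpart of those: average the pointwise expansion of Lemma \ref{lem: L(f)} over $\partial B_r(0)$ and kill the local term by Fubini. Two details in your write-up deserve comment. First, the uniformity of the $o(1)$ on spheres, which you flag as a point to verify, is in fact automatic from the statement of Lemma \ref{lem: L(f)}: ``$o(1)\to 0$ as $|x|\to\infty$'' means the smallness threshold depends only on $|x|$, hence is constant on each sphere, so no inspection of its proof is needed. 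Second, and more substantively, your cap estimate needs one geometric inequality you did not state: the singularity of $\log\frac{1}{|x-y|}$ sits at $y$, which is generally \emph{off} the sphere, so the flat-disk comparison must be run against the radial projection $p$ of $y$ onto $\partial B_r(0)$. Writing $\rho=|y|$ and noting the cap is nonempty only if $|r-\rho|\le 1$, one has the identity $|x-y|^2=(r-\rho)^2+\frac{\rho}{r}|x-p|^2\ge \frac{\rho}{r}|x-p|^2$, hence $\log\frac{1}{|x-y|}\le \log\frac{\sqrt{2}}{|x-p|}$ for $r\ge 2$; since also $|x-p|\le |x-y|+|y-p|\le 2$ on the cap, the cap integral is dominated by $C\int_{|z|\le 2,\, z\in\mathbb{R}^{n-1}}(|\log|z||+1)\,\ud z<\infty$, uniformly in $y$ and $r$. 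With this inequality inserted, your Tonelli step is justified (the integrand is nonnegative on $\{|x-y|\le 1\}$) and the local term is $O(r^{-(n-1)})$ as claimed, closing the proof. Your alternative route, via the exact spherical mean $M(r,\rho)=\log\max(r,\rho)+\phi\bigl(\min(r,\rho)/\max(r,\rho)\bigr)$ with $\phi$ bounded and continuous on $[0,1]$, is also correct (the domination near $y=0$ uses only that $Qe^{nu}$ is locally bounded and $\log|y|$ is locally integrable); it is the cleaner and more self-contained of the two, and is likely closer in spirit to the potential-theoretic computation in \cite{Li 24 revisited}.
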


{\bf Proof of Theorem \ref{thm: tua(g) leq 1} and Theorem \ref{thm: tau(g) =0}:}

Firstly, Proposition \ref{prop} demonstrates that, based on the sign control of the scalar curvature at infinity, the metric 
$g$  is normal, meaning that
\begin{equation}\label{u is normal}
	u(x) = \mathcal{L}(Qe^{nu}) + C.
\end{equation}
A direct computation reveals that the scalar curvature can be represented as follows:
\begin{equation}\label{scalar curvature}
	R_g=2(n-1)e^{-2u}(-\Delta u-\frac{n-2}{2}|\nabla u|^2).
\end{equation}

If $R_g\geq 0$ near infinity, for $r\gg1$, the equation \eqref{scalar curvature} ensures that 
$$r^2\cdot\frac{1}{|\partial B_r(0)|}\int_{\partial B_r(0)}\left(-\Delta u-\frac{n-2}{2}|\nabla u|^2\right)\ud \sigma\geq 0.
$$
Combining \eqref{u is normal} with Lemma \ref{lem: 4.1} and Lemma \ref{lem: 4.2}, and letting $r\to\infty$, we obtain that 
\begin{equation}\label{ R_g geq 0 range}
	(n-2)\alpha_0-\frac{n-2}{2}\alpha_0^2\geq 0.
\end{equation}
Since the metric $g$ is complete, combing Corollary \ref{cor:normal metric} with  \eqref{ R_g geq 0 range}, one has
\begin{equation}\label{0leq alpha_0 leq 1}
	0\leq \alpha_0\leq 1.
\end{equation}
Applying Theorem \ref{thm: tau(g)} and the estimate \eqref{0leq alpha_0 leq 1}, we obtain the volume control
$$\tau(g)\leq 1.$$

If $R_g\geq C>0$ near infinity, for $r\gg1$, we make use of Jensen's inequality and Lemma \ref{lem: 4.3} to obtain that 
\begin{align*}
	&r^2\cdot\frac{1}{|\partial B_r(0)|}\int_{\partial B_r(0)}\left(-\Delta u-\frac{n-2}{2}|\nabla u|^2\right)\ud \sigma\\
	=&r^2\cdot\frac{1}{|\partial B_r(0)|}\int_{\partial B_r(0)}\frac{1}{2(n-1)}R_ge^{2u}\ud\sigma\\
	\geq &Cr^2\cdot\frac{1}{|\partial B_r(0)|}\int_{\partial B_r(0)}e^{2u}\ud\sigma\\
	\geq &Cr^2\exp\left(\frac{1}{|\partial B_r(0)|}\int_{\partial B_r(0)}2u\ud\sigma\right)\\
	=&Cr^2\cdot r^{-2\alpha_0+o(1)}.
\end{align*}
Combing Lemma \ref{lem: 4.1} with Lemma \ref{lem: 4.2},   the left side of above estimate is bounded from above as $r\to\infty$. Using this fact along with the representation on the right side of the above estimate, we obtain that
$$\alpha_0\geq 1.$$
Combing with Corollary \ref{cor:normal metric}, we have
\begin{equation}\label{alpha_0=1}
	\alpha_0=1.
\end{equation}
 Making use of Theorem \ref{thm: tau(g)} and  the equation \eqref{alpha_0=1}, we finally obtain that 
 $$\tau(g)=0.$$

Thus we finish our proofs.

\end{document}